\documentclass[11pt,a4paper]{article}

\usepackage{amsmath,amsthm}

\PassOptionsToPackage{backend=biber}{biblatex}
\usepackage{cgeometry}
\usepackage{newunicodechar}
\newunicodechar{Ł}{\L}
\newunicodechar{ł}{\l}

\usepackage{tikz}
\usetikzlibrary{decorations.pathreplacing}
\allowdisplaybreaks
\title{A counterexample to the bounded mass property}
\author{Mingchen Xia \and Kewei Zhang}
\date{}

\begin{document}
	
	\maketitle
	
	\begin{abstract}
		A compact complex manifold has the \emph{bounded mass property} if,
		for one (equivalently, every) Hermitian form $\omega$, the masses
		$\int_X(\omega+\ddc\varphi)^n$ are uniformly bounded over all smooth
		$\varphi$ with $\omega+\ddc\varphi>0$.  We prove
		that this property fails on the Hopf threefold
		$(\mathbb C^3\setminus\{0\})/\langle z\mapsto\mathrm{e}^{-1}z\rangle$,
		answering a question of Boucksom--Guedj--Lu.
	\end{abstract}
	
	\section{Introduction}
	
	On a compact K\"ahler manifold $X$ of dimension $n$, Stokes' theorem
	shows that the total Monge--Amp\`ere mass
	$\int_X(\omega+\ddc\varphi)^n$ of a K\"ahler potential $\varphi$ does
	not depend on $\varphi$: it always equals $\int_X\omega^n$.  This
	invariance is a cornerstone of pluripotential theory on K\"ahler
	manifolds, which runs from the foundational work of Bedford--Taylor
	\cite{BT82} through Yau's solution \cite{Y78} of the Calabi
	conjecture \cite{Cal57} to Ko\l odziej's uniform estimates
	\cite{Kol98}; see \cite{GZ17} for a systematic account.
	
	On a general compact complex manifold, the natural background forms
	are merely Hermitian, and complex Monge--Amp\`ere equations remain
	both meaningful and useful: after the pioneering work of Cherrier
	\cite{Che87}, Tosatti--Weinkove \cite{TW10} solved the Calabi--Yau
	equation for an arbitrary Hermitian metric, and the Hermitian theory
	has since developed rapidly, see for instance,
	\cite{DK12,KN15,STW17} and the pluripotential approach of
	\cite{GL21,GL23}.  Since the Hermitian form $\omega$ is no longer
	closed, the mass $\int_X(\omega+\ddc\varphi)^n$ does depend
	on the potential $\varphi$, and its uniform control is a recurrent
	theme of this theory \cite{Chi14,DK12,KN15,GL23}.  Understanding how
	large the mass can become is thus a basic problem of pluripotential
	theory on Hermitian manifolds \cite{GL22}.  Following
	Boucksom--Guedj--Lu \cite[Definition 1.16]{BGL}, consider the upper
	Monge--Amp\`ere mass
	\begin{equation}
		\overline{\vol}(\omega)
		\coloneqq \sup\left\{
		\int_X(\omega+\ddc\varphi)^n:
		\varphi\in C^\infty(X,\mathbb R),\ \omega+\ddc\varphi>0
		\right\}.
		\label{eq:upper-volume}
	\end{equation}
	The manifold $X$ is said to have the \emph{bounded mass property} if
	\eqref{eq:upper-volume} is finite; as any two Hermitian forms on $X$
	are uniformly comparable, the property does not depend on the choice of
	$\omega$ \cite[\S1.5]{BGL}.
	
	The bounded mass property is known to hold in two important cases: in
	dimension $n\le2$, by testing against a Gauduchon metric
	\cite{Gau77}, and on manifolds of the Fujiki class
	\cite{Fuj78,Var89}; see \cite{GL22} and \cite[\S1.5]{BGL}.  When it
	holds, the finite quantity \eqref{eq:upper-volume}, the
	Monge--Amp\`ere volume of Guedj--Lu \cite{GL22}, behaves in several
	respects like the volume of a K\"ahler class; its finiteness is the starting
	point of the theory of volumes of Bott--Chern classes developed in
	\cite{BGL}, and it enters as a standing hypothesis in recent
	developments of Hermitian pluripotential theory \cite{ALS25,LiX26}.
	Whether the property holds on \emph{every} compact complex manifold
	was left open in \cite{BGL}.  The first test case is that of Hopf
	manifolds \cite{Hopf48}, the classical examples of compact
	non-K\"ahler complex manifolds, which occupy an active place in
	non-K\"ahler geometry, from locally conformally K\"ahler metrics
	\cite{OV24} to the Chern--Ricci flow \cite{TW15}.
	The bounded mass question is raised explicitly for Hopf manifolds of
	dimension at least three in \cite[Example 1.19]{BGL}; the following
	theorem answers it in the negative, disproving the universal bounded
	mass property already in complex dimension three.
	
	\begin{theorem}\label{thm:main}
		On the Hopf threefold
		\[
		X=(\mathbb C^3\setminus\{0\})/\langle z\mapsto \mathrm{e}^{-1}z\rangle
		\]
		there are a smooth Hermitian form $\omega$ and functions
		$\varphi_j\in C^\infty(X,\mathbb R)$, $j\in\mathbb N$, such that
		\[
		\omega+\ddc\varphi_j>0,
		\qquad
		\int_X(\omega+\ddc\varphi_j)^3\longrightarrow+\infty
		\quad\text{as }j\to\infty.
		\]
		Consequently $\overline{\vol}(\omega)=+\infty$, so $X$ does not have the
		bounded mass property.
	\end{theorem}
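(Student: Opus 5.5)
The plan is to reduce the mass to a quantity that only depends on first derivatives of $\varphi$, and then make that quantity blow up using potentials with a logarithmic pole along a hypersurface. For any smooth $\varphi$, expand $(\omega+\ddc\varphi)^3$ and integrate by parts on the compact manifold $X$. The top term $\int_X(\ddc\varphi)^3$ vanishes. Stokes' theorem moves derivatives onto $\omega$, which gives
\[
\int_X(\omega+\ddc\varphi)^3=\int_X\omega^3+3\int_X\varphi\,\ddc(\omega^2)-3\int_X d\varphi\wedge d^c\varphi\wedge\ddc\omega
\]
up to sign conventions; I would double-check the last sign. The linear term is harmless, because $\omega$-psh functions normalised by $\sup\varphi=0$ are uniformly bounded in $L^1$. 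So everything rests on making the quadratic term $-\int d\varphi\wedge d^c\varphi\wedge\ddc\omega$ unbounded while keeping $\omega+\ddc\varphi>0$. Two constraints on the choice of $\varphi$ follow at once:
\begin{itemize}
\item pulling back from $\mathbb P^2$ along the elliptic fibration $X\to\mathbb P^2$ cannot work, since then every term is controlled by fixed masses on $\mathbb P^2$;
\item high-frequency oscillations cannot work either, since $\ddc\varphi\ge-\omega$ forces the amplitude to scale like the inverse square of the frequency.
\end{itemize}
So $\varphi_j$ must have genuinely large gradient with only one-sided Hessian control, which means logarithmic singularities.

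The concrete family I would use comes from the Hopf surface $S=\{z_1=0\}/\mathbb Z\subset X$. Set
\[
\varphi_\varepsilon=\delta\log\frac{|z_1|^2+\varepsilon|z|^2}{|z|^2},
\]
which is invariant under $z\mapsto e^{-1}z$. One has $\ddc\varphi_\varepsilon\ge-\delta\,\ddc\log|z|^2$, so $\omega+\ddc\varphi_\varepsilon>0$ for a suitable Hermitian $\omega$ (for instance a perturbation of $\ddc|z|^2/|z|^2$) and small $\delta$. Near $S$ one has $|d\varphi_\varepsilon|^2\sim\delta^2|z_1|^2/(|z_1|^2+\varepsilon)^2$ in the two real normal directions. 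Integrating this over a normal disc gives a divergence of order $\delta^2\log(1/\varepsilon)$, so the quadratic term behaves like
\[
c\,\delta^2\log(1/\varepsilon)\int_S(\ddc\omega)|_S+O(1).
\]
The difficulty is visible already here: by Stokes on the compact surface $S$, $\int_S\ddc\omega=0$. So with a constant normal coefficient the leading divergence cancels.

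The main step is therefore to break this cancellation, and this is the obstacle I expect to be hardest. My first attempt would make the normal weight vary along $S$, so the leading term becomes $\log(1/\varepsilon)\int_S h\,\ddc\omega|_S$ with $h$ non-constant, while $\omega$ is chosen so that $\ddc\omega|_S$ has mixed sign and correlates with $h$. The natural candidates are:
\begin{itemize}
\item replacing $\varepsilon$ by $\varepsilon e^{g}$ for a function $g$;
\item letting $\delta$ depend on the $\mathbb P^2$-coordinates;
\item summing such potentials over a moving family of hypersurfaces $\{\ell(z)=0\}$, $\ell$ linear.
\end{itemize}
Each change produces cross terms $d\delta\wedge d^c\log|z_1|$ of size $1/r$ in $\ddc\varphi$. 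These must be absorbed, via Cauchy--Schwarz, into the positive term $\delta\,\varepsilon/(r^2+\varepsilon)^2\,dz_1\wedge d\bar z_1$ together with $\omega$. That absorption only works in a region $r\lesssim\sqrt\varepsilon$, so I would cut off the variation of $\delta$ there and check whether the $\log$-divergent contribution survives. If it does not, the fallback is to run the same computation with singularities along the elliptic fibres of $X\to\mathbb P^2$ but with a steeper, non-logarithmic profile $\chi(\log|w|)$ in the transverse variable $w$. In that case I would optimise $\chi$ under the constraint $\chi''\ge0$ and $\chi'\,\ddc\log|w|\ge-\omega$, so as to find a divergent lower bound for the quadratic term.
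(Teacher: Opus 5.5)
Your integration-by-parts formula is correct; the sign you wanted to double-check is right. The genuine gap is that no family $\varphi_j$ is ever produced, and the family you do write down cannot work.

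Since $\frac{|z_1|^2+\varepsilon|z|^2}{|z|^2}$ is invariant under $z\mapsto\lambda z$ for every $\lambda\in\mathbb C^*$, your $\varphi_\varepsilon$ is pulled back from $\mathbb P^2$. That is exactly the case your own first bullet excludes, and the cancellation $\int_S\ddc\omega|_S=0$ is a symptom of it. None of the proposed repairs escapes this:
\begin{itemize}
\item Letting $\delta$ depend on the $\mathbb P^2$-coordinates, or summing over hypersurfaces $\{\ell(z)=0\}$, again gives pullbacks.
\item Replacing $\varepsilon$ by $\varepsilon\mathrm{e}^g$ produces a fiber derivative of size at most $\delta|g_w|$. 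This is bounded unless $g$ itself carries a fiber singularity, which is the original problem.
\item The fallback with singularities along the elliptic fibres, as stated, again has a horizontal gradient.
\item Confining the variation of $\delta$ to $r\lesssim\sqrt\varepsilon$ leaves $\delta$ constant along $S$ on the range $\sqrt\varepsilon\lesssim r\lesssim1$. That range is what produces the $\log(1/\varepsilon)$, so the cancellation persists.
\end{itemize}

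The missing idea is to compute $\ddc\omega$ for the form $\ddc|z|^2/|z|^2$ that you propose to perturb. One has $\ddc|z|^2/|z|^2=\alpha+\beta$ with $\alpha=\ddc\log|z|^2=\pi^*\omega_{\mathrm{FS}}$ and $\beta=\mathrm{i}\partial\log|z|^2\wedge\bar\partial\log|z|^2$. Then $\ddc\omega=-\alpha^2$ and $\ddc(\omega^2)=0$. Hence the linear term vanishes, and the quadratic term equals $3\int_X\mathrm{i}\partial\varphi\wedge\bar\partial\varphi\wedge\pi^*\omega_{\mathrm{FS}}^2\ge0$. This is a fiberwise Dirichlet energy in $\varphi_w$ (Lemma~\ref{lem:mass}).

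So there is no sign cancellation to break. Any potential with horizontal differential leaves the mass equal to $\int_X\omega^3$. The logarithmic singularity must instead live in the fiber variable: a small multiple $\eta$ of a regularized Green function of $E$ at a point, i.e.\ along a local section of $\pi$. Its fiber energy is $2\eta^2\log(1/t)+O(\eta^2)$, so one needs $\eta^2\log(1/t)\to\infty$.

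Even granting this, your plan never reaches the genuinely hard step, which is globalization. The fiber potential must become constant over the base outside a compact set while $\omega+\ddc V$ stays positive. For $V=V(x,w)$ this is the condition $Q[V]=V_{xx}+\mu-|\mu+V_{x\bar w}|^2/(1+V_{w\bar w})>0$. Every $x$-variation of the fiber profile enters through the cross term $V_{x\bar w}$, while the background slack $\mu(1-\mu)$ decays exponentially in $x$.

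The paper handles this as follows:
\begin{itemize}
\item It runs the regularization time along the heat flow: $t(x)$ increases from $\varepsilon_\eta=\eta^{1/\eta^2}$ to order $\eta^{-1}$ over an $x$-interval of length $O(\sqrt{\log(1/\eta)/\eta})$.
\item The heat equation together with $(\log k_t)_{w\bar w}\ge-1/t$ controls the terms quadratic in the transport speed.
\item The localization estimate (Lemma~\ref{lem:heat-transition}) controls the linear term.
\item The leftover negative part of $Q$, of total $x$-integral $o(1)$, is removed by adding a function of $x$ alone.
\end{itemize}
Without an argument of this kind, the proposal does not prove the theorem.
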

	
	\subsection*{Strategy of the proof}
	
	The starting point is the structure of the Hopf threefold as an
	elliptic fibration $\pi\colon X\to\mathbb P^2$ with fiber
	$E=\mathbb C/(\mathbb Z+2\pi\mathrm{i}\mathbb Z)$.  The threefold
	carries a natural Hermitian form $\omega=\alpha+\beta$, where
	$\alpha=\pi^*\omega_{\mathrm{FS}}$ is the pullback of a Fubini--Study
	form and $\beta$ is a semipositive form of rank one which is positive
	along the fibers.  The failure of $\omega$ to be closed is concentrated
	in the identity $\ddc\beta=-\alpha^2$, whose sign is decisive: it
	produces, for every $V\in C^\infty(X,\mathbb R)$, the exact mass
	identity
	\[
	\int_X(\omega+\ddc V)^3
	=\int_X\omega^3+12\pi\int_{\mathbb P^2}
	\left(\int_{\pi^{-1}(y)}|V_w|^2\,\mathrm{d}A_E\right)
	\omega_{\mathrm{FS}}^2(y),
	\]
	where $V_w$ is the fiberwise derivative and $\mathrm{d}A_E$ the
	normalized Haar measure of the fiber (\cref{lem:mass}).  The
	Monge--Amp\`ere mass thus exceeds $\int_X\omega^3$ by a fiberwise
	Dirichlet energy, and the theorem reduces to constructing potentials
	$V_\eta$, with $\omega+\ddc V_\eta>0$, whose fiber energy over a fixed
	ball in the base tends to infinity.  We work with potentials
	$V=V(x,w)$ depending only on the fiber variable $w$ and the logarithmic
	radial coordinate $x$ of an affine chart of the base; for these,
	positivity of $\omega+\ddc V$ reduces to three scalar inequalities
	(\cref{lem:positivity}), of which the essential one is
	\[
	Q[V]\coloneqq V_{xx}+\mu-\frac{|\mu+V_{x\bar w}|^2}{1+V_{w\bar w}}>0,
	\qquad
	\mu\coloneqq\frac{\mathrm{e}^x}{1+\mathrm{e}^x}.
	\]
	
	The Green function of each fiber has infinite
	energy, and a small negative multiple of a heat-kernel regularization
	yields potentials $F_\eta(t,\cdot)$ with energy
	$2\eta^2\log(1/t)+O(\eta^2)$ as $t\downarrow0$
	(\cref{lem:heat-fiber}).  Taking
	$\log(1/t)=\eta^{-2}\log(1/\eta)$ therefore makes the energy diverge.
	
	The global difficulty is that $V(x,w)$ must be constant for $x\gg0$ in
	order to extend smoothly to $X$: the fiber oscillation has to be
	suppressed as $x$ increases, and every change of the fiber-dependent part
	enters the positivity condition through the cross term
	$|\mu+V_{x\bar w}|^2$ in $Q[V]$.  We suppress the oscillation along
	the heat flow itself, setting $V(x,w)=F_\eta(t_\eta(x),w)$, where
	$t_\eta$ increases slowly from
	$\varepsilon_\eta$ to the order of $\eta^{-1}$; once the time exceeds
	$(\mathrm{e}\,\eta)^{-1}$ the fiber dependence is exponentially small
	and is then removed by a cutoff function.  Two properties of the heat kernel
	control the effect of the transport on $Q$.  First, the heat
	equation combined with the logarithmic convexity bound
	$(\log k_t)_{w\bar w}\ge-1/t$ controls the terms of $Q$ which are
	quadratic in the transport speed.  Second, \cref{lem:heat-transition}
	shows that the remaining
	linear term is either dominated by a nonnegative complete square or
	itself exponentially small.  The resulting negative part of $Q$ has
	$x$-integral $o(1)$ as $\eta\to0$, and adding to $V(x,w)$ a correction term
	depending on $x$ alone removes it without changing the fiber energy
	over the fixed ball.
	
	\paragraph*{Notation.}

	We write $\ddc=\mathrm{i}\partial\bar\partial$.  Unless it carries a
	subscript or is explicitly defined, $C$ denotes a positive absolute
	constant whose value may change from one occurrence to the next.

	\paragraph*{Acknowledgments.}
	
	The initial counterexample was constructed with the
	\href{https://github.com/frenzymath/Rethlas}{Rethlas agent} (improved by Felix Ye), using the
	\texttt{gpt-5.6-sol} model.  The authors then simplified the
	construction and improved the presentation, and are fully responsible
	for all assertions in this paper.
	
	The first-named author is supported by the National Key R\&D Program
	of China under Grant No.~2025YFA1018200.  The second-named author is
	supported by the Scientific Research Innovation Capability Support
	Project for Young Faculty SRICSPYF-ZY2025169 and NSFC grant 12571060.
	
	\section{The Hopf threefold and the mass identity}

	\subsection{The elliptic fibration and the background form}

	Let $\widetilde X=\mathbb C^3\setminus\{0\}$ and let
	$\gamma\colon\widetilde X\to\widetilde X$ be the contraction
	$\gamma(z)=\mathrm{e}^{-1}z$.  The quotient
	\[
		X=\widetilde X/\langle\gamma\rangle
	\]
	is a compact Hopf threefold.  Projectivization is invariant under $\gamma$
	and therefore induces an elliptic fibration
	\[
	\pi\colon X\longrightarrow\mathbb P^2,
	\qquad [z]\longmapsto[z_0:z_1:z_2].
	\]
	All the fibers are isomorphic to the elliptic curve
	\[
	E\coloneqq\mathbb C/(\mathbb Z+2\pi\mathrm{i}\mathbb Z).
	\]
	
	We first construct the background Hermitian form.  On $\widetilde X$, set
	\[
	u=\log|z|^2,
	\qquad
	\alpha=\mathrm{i}\partial\bar\partial u,
	\qquad
	\beta=\mathrm{i}\partial u\wedge\bar\partial u,
	\qquad
	\omega=\alpha+\beta.
	\]
	Although $u\circ\gamma=u-2$, the forms $\alpha$ and $\beta$ are
	$\gamma$-invariant and hence descend to $X$.  The form $\alpha$ is the
	pullback $\pi^*\omega_{\mathrm{FS}}$ for a suitable normalization of
	the Fubini--Study form $\omega_{\mathrm{FS}}$ on $\mathbb P^2$.  At a point
	$q\in X$, the vertical tangent line is $\ker(\mathrm{d}\pi_q)$.
	The form $\alpha$ has two positive eigenvalues and
	$\ker(\alpha_q)=\ker(\mathrm{d}\pi_q)$.  The form $\beta$ has one positive
	eigenvalue, and its restriction to $\ker(\mathrm{d}\pi_q)$ is positive.
	Consequently $\ker(\alpha_q)\cap\ker(\beta_q)=\{0\}$, so
	$\omega=\alpha+\beta$ is positive definite.  Moreover
	\begin{equation}
		\alpha^3=0,
		\qquad \beta^2=0,
		\qquad \mathrm{d}\alpha=0,
		\qquad \ddc\beta=-\alpha^2,
		\qquad \ddc(\omega^2)=0.
		\label{eq:hopf-identities}
	\end{equation}
	The first three identities hold because $\alpha$ is pulled back from
	the surface $\mathbb P^2$, because $\partial u\wedge\partial u=0$, and
	because $\alpha=\ddc u$.  For the fourth,
	$\bar\partial\beta=-\mathrm{i}\,\partial\bar\partial u\wedge
	\bar\partial u$, whence
	$\ddc\beta=\mathrm{i}\partial\bar\partial\beta
	=(\partial\bar\partial u)^2=-\alpha^2$.  For the last,
	$\omega^2=\alpha^2+2\alpha\wedge\beta$ by $\beta^2=0$, while
	$\ddc(\alpha^2)=0$ because $\alpha^2$ is closed of pure type and hence
	separately $\partial$- and $\bar\partial$-closed, and
	$\ddc(\alpha\wedge\beta)=\alpha\wedge\ddc\beta=-\alpha^3=0$.
	
	\subsection{The affine trivialization}

	Let
	\[
	U_0=\{[z_0:z_1:z_2]\in\mathbb P^2:z_0\ne0\}\simeq\mathbb C^2
	\]
	and write
	\[
	\xi_j=\frac{z_j}{z_0}\quad(j=1,2),
	\qquad
	\xi=(\xi_1,\xi_2)\in\mathbb C^2.
	\]
	Thus $\xi$ is the affine coordinate vector on $U_0$, and $|\xi|$ denotes
	its standard Euclidean norm.  For a point of $\pi^{-1}(U_0)$ represented
	by $z=(z_0,z_1,z_2)$, choose any number $w\in\mathbb C$ satisfying
	$\mathrm{e}^{w}=z_0$.  Another such number is $w+2\pi\mathrm{i}k$
	with $k\in\mathbb Z$, whereas replacing $z$ by $\gamma^m(z)$,
	$m\in\mathbb Z$, replaces $w$ by $w-m$.
	Thus the class
	$
	[w]\in\mathbb C/(\mathbb Z+2\pi\mathrm{i}\mathbb Z)=E
	$
	is unchanged when either the logarithm of $z_0$ or the representative
	of the Hopf class is changed.  We therefore have the explicit
	trivialization
	\begin{equation}
		\pi^{-1}(U_0)\longrightarrow U_0\times E,
		\qquad
		[z]\longmapsto\bigl(\xi,[w]\bigr),
		\label{eq:affine-trivialization}
	\end{equation}
	whose inverse sends $(\xi,[w])$ to the Hopf class of
	$\mathrm{e}^{\widetilde w}(1,\xi_1,\xi_2)$, where $\widetilde w$ is any
	representative of $[w]$.  Changing the representative does not change the
	Hopf class.  We henceforth write $w$ for the class in $E$.  Expressions
	involving $\mathrm{d}w$ or derivatives with respect to $w$ are computed
	using any local complex lift of that class and do not depend on the lift.
	We denote by
	\begin{equation}\label{eq:def-dA}
		\mathrm{d}A_E\coloneqq \frac{\mathrm{i}}{4\pi}\mathrm{d}w\wedge \mathrm{d}\bar w
	\end{equation}
	the Haar area form on $E$, chosen such that
	$\int_E\mathrm{d}A_E=1$.

	We record an elementary extension principle on
	the affine chart.

	\begin{lemma}\label{lem:extension}
		Let $K$ be a compact subset of $U_0$, and let $f$ be a smooth
		function on $\pi^{-1}(U_0)$ which is equal to a constant on
		$\pi^{-1}(U_0\setminus K)$.  Then $f$ extends to a smooth function
		on $X$.
	\end{lemma}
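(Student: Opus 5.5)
Let $c$ denote the constant value of $f$ on $\pi^{-1}(U_0\setminus K)$. The plan is to define the extension $\tilde f\colon X\to\mathbb R$ by $\tilde f=f$ on $\pi^{-1}(U_0)$ and $\tilde f\equiv c$ on $X\setminus\pi^{-1}(U_0)=\pi^{-1}(\mathbb P^2\setminus U_0)$, and then check smoothness locally. Smoothness is a local property, so it suffices to cover $X$ by two open sets on each of which $\tilde f$ is visibly smooth and whose union is $X$.

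First, since $K\subset U_0$ is compact and $\mathbb P^2$ is Hausdorff, $K$ is closed in $\mathbb P^2$. Hence $W\coloneqq\mathbb P^2\setminus K$ is open, and $\pi^{-1}(W)$ is open in $X$ because $\pi$ is continuous. The two sets $\pi^{-1}(U_0)$ and $\pi^{-1}(W)$ cover $X$, since $U_0\cup W=\mathbb P^2$ (as $K\subset U_0$).

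On $\pi^{-1}(U_0)$ the function $\tilde f$ equals $f$, which is smooth by hypothesis. On $\pi^{-1}(W)$, a point either lies over $U_0\setminus K$, where $\tilde f=f=c$, or lies over $\mathbb P^2\setminus U_0$, where $\tilde f=c$ by definition. So $\tilde f\equiv c$ on $\pi^{-1}(W)$, which is smooth. On the overlap $\pi^{-1}(U_0\cap W)=\pi^{-1}(U_0\setminus K)$ both descriptions give $c$, so they are consistent. Therefore $\tilde f$ is smooth on all of $X$ and restricts to $f$ on $\pi^{-1}(U_0)$.

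The only point needing care is that $K$ be closed in $\mathbb P^2$ and not merely relatively closed in $U_0$. Compactness gives this, and there is no real obstacle beyond that. Note that nothing about the elliptic fibration beyond continuity of $\pi$ is used.
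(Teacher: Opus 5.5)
Your proposal is correct and is essentially the paper's argument: it extends $f$ by the constant $c$ and covers $X$ by the two open sets $\pi^{-1}(U_0)$ and $X\setminus\pi^{-1}(K)=\pi^{-1}(\mathbb P^2\setminus K)$, on which the two definitions agree over $\pi^{-1}(U_0\setminus K)$. Your explicit remark that $K$ is closed in $\mathbb P^2$ because it is compact in a Hausdorff space is a detail the paper uses without stating.
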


	\begin{proof}
		Define $\widehat f=f$ on $\pi^{-1}(U_0)$ and $\widehat f=c$ on
		$X\setminus\pi^{-1}(K)$.  These two open sets cover $X$, and the two
		definitions agree on their overlap $\pi^{-1}(U_0\setminus K)$.  Thus
		$\widehat f$ is a smooth function on $X$.
	\end{proof}

	\subsection{The positivity criterion}

	On the punctured affine chart $\xi\ne0$, set
	\[
		x=\log|\xi|^2,
		\qquad
		\mu=\mu(x)=\frac{\mathrm{e}^x}{1+\mathrm{e}^x}.
	\]
	Thus $x$ is the logarithmic radial coordinate and one has $\mu_x=\mu(1-\mu)$.
	For a smooth real function $V=V(x,w)$, subscripts denote derivatives.
	\begin{lemma}\label{lem:positivity}
		Let $V\colon\mathbb R\times E\to\mathbb R$ be smooth, and suppose
		that $V$ is independent of $x$ for all sufficiently negative $x$,
		and equal to a single constant, independent of both $x$ and $w$,
		for all sufficiently positive $x$.  Then $V$ extends to a smooth
		function on $X$, and on
		$\xi\ne0$ the Hermitian form
		$\omega+\ddc V$ is positive if and only if
		\begin{equation}\label{eq:three-minors}
			1+V_{w\bar w}>0,
			\qquad
			\mu+V_x>0,
			\qquad
			Q[V]\coloneqq V_{xx}+\mu-
			\frac{|\mu+V_{x\bar w}|^2}{1+V_{w\bar w}}>0.
		\end{equation}
		At $\xi=0$, positivity is equivalent to $1+V_{w\bar w}>0$.
		Where $V$ is constant, positivity is automatic.
	\end{lemma}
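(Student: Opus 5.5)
The plan is to work entirely in the affine trivialization \eqref{eq:affine-trivialization}. First we express $\omega$ in the coordinates $(\xi,w)$. Since $z=\mathrm{e}^{w}(1,\xi_1,\xi_2)$, we have
\[
u=w+\bar w+\ell(\xi),\qquad \ell(\xi)\coloneqq\log(1+|\xi|^2).
\]
Hence $\alpha=\ddc\ell$, $\partial u=\mathrm{d}w+\partial\ell$ and
\[
\beta=\mathrm{i}(\mathrm{d}w+\partial\ell)\wedge(\mathrm{d}\bar w+\bar\partial\ell).
\]

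\emph{Smooth extension.} The hypotheses on $V$ say two things. For $x$ sufficiently negative, i.e.\ on a punctured ball around $\xi=0$, $V=V(w)$, so $V$ extends smoothly across $\xi=0$. For $x$ sufficiently positive, $V$ is a constant $c$ outside a compact ball $K\subset U_0$. Therefore \cref{lem:extension} gives a smooth extension to $X$. Where $V$ is constant, $\omega+\ddc V=\omega>0$. At $\xi=0$ we have $\partial\ell=0$ and $\alpha=\mathrm{i}\,\mathrm{d}\xi\wedge\mathrm{d}\bar\xi$ (standard), and $\ddc V=V_{w\bar w}\,\mathrm{i}\,\mathrm{d}w\wedge\mathrm{d}\bar w$ since $V$ depends only on $w$ near $0$. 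So the form is block diagonal in $(\mathrm{d}\xi,\mathrm{d}w)$ with $w$-entry $1+V_{w\bar w}$. This gives the claim at $\xi=0$.

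\emph{The case $\xi\ne0$.} Here I would change to the local holomorphic coordinates $(\sigma,\theta,w)$ with $\sigma=\log\xi_1$ and $\theta=\xi_2/\xi_1$ (where $\xi_1\ne0$; the other chart is symmetric). Then
\[
x=\sigma+\bar\sigma+\lambda(\theta),\qquad \lambda(\theta)\coloneqq\log(1+|\theta|^2),
\]
so that
\[
\ddc x=\ddc\lambda>0 \text{ involves only }\mathrm{d}\theta,\qquad \partial x=\mathrm{d}\sigma+\partial\lambda .
\]
Consequently $\{\partial x,\ \mathrm{d}w,\ \mathrm{d}\theta\}$ is a $(1,0)$-coframe. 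Since $\ell=\log(1+\mathrm{e}^x)$, the chain rule together with $\mu_x+\mu^2=\mu$ gives
\[
\alpha=\mu_x\,\mathrm{i}\partial x\wedge\bar\partial x+\mu\,\ddc x,\qquad \beta=\mathrm{i}(\mathrm{d}w+\mu\partial x)\wedge(\mathrm{d}\bar w+\mu\bar\partial x).
\]
Similarly, for $V=V(x,w)$,
\[
\ddc V=V_{xx}\,\mathrm{i}\partial x\wedge\bar\partial x+V_x\,\ddc x+\mathrm{i}\bigl(V_{x\bar w}\,\partial x\wedge\mathrm{d}\bar w+V_{w\bar x}\,\mathrm{d}w\wedge\bar\partial x\bigr)+V_{w\bar w}\,\mathrm{i}\,\mathrm{d}w\wedge\mathrm{d}\bar w .
\]
Here $V_{x\bar w}$ is understood with $x$ treated as a real variable, so that $\partial V=V_x\partial x+V_w\mathrm{d}w$. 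Collecting terms, in this coframe $\omega+\ddc V$ splits into two blocks:
\begin{itemize}
\item the $\mathrm{d}\theta$-block $(\mu+V_x)\,\ddc\lambda$;
\item the Hermitian $2\times2$ block in $(\partial x,\mathrm{d}w)$ with diagonal entries $\mu+V_{xx}$ and $1+V_{w\bar w}$ and off-diagonal entry $\mu+V_{x\bar w}$.
\end{itemize}
Positivity then follows from Sylvester's criterion, i.e.\ a Schur complement on the second block. The form is positive if and only if $\mu+V_x>0$, $1+V_{w\bar w}>0$, and $Q[V]>0$, which is exactly \eqref{eq:three-minors}.

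The main obstacle I expect is the bookkeeping in this step. One must make sure the $\mathrm{d}\theta$-block truly decouples, i.e.\ that no cross terms between $\mathrm{d}\theta$ and $(\partial x,\mathrm{d}w)$ survive. This hinges on $\ddc x$ being purely in $\theta$ and on every other term being built from $\partial x$ and $\mathrm{d}w$. One must also check that the conventions for $V_{x\bar w}$ match the ones used in $Q$.
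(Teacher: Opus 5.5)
Your proposal is correct and is essentially the paper's argument. It uses the same expansion of $\alpha$, $\beta$ and $\mathrm{i}\partial\bar\partial V$ in terms of $\partial x$, $\mathrm{i}\partial\bar\partial x$ and $\mathrm{d}w$, the same block-diagonal splitting, and the same Schur complement on the $(\partial x,\mathrm{d}w)$ block; the only difference is bookkeeping. You get the decoupling from the holomorphic coordinates $(\log\xi_1,\xi_2/\xi_1,w)$, whose dual frame is, up to rescaling of each vector, exactly the paper's frame $(e_\perp,e_r,e_w)$ (the dual of $\partial x$ is the Euler field $\xi_1\partial/\partial\xi_1+\xi_2\partial/\partial\xi_2$), whereas the paper gets it from Fubini--Study orthogonality.
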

	
	\begin{proof}
		The hypothesis for $x\ll0$ gives a smooth extension across $\xi=0$,
		while the hypothesis for $x\gg 0$ and \cref{lem:extension} extend the
		function smoothly to all of $X$.
		We compute the positivity condition at a point with $\xi\ne0$.  In the
		trivialization \eqref{eq:affine-trivialization}, using a local lift of $w$,
		one has
		\[
		u=w+\bar w+\log(1+|\xi|^2)
		=w+\bar w+\log(1+\mathrm{e}^x),
		\qquad
		\partial u=\mathrm{d}w+\mu\,\partial x.
		\]
		Since the local potential of $\omega_{\mathrm{FS}}$ is
		$\log(1+|\xi|^2)$, differentiation gives
		\begin{equation}
			\alpha
			=\mathrm{i}\mu\,\partial\bar\partial x
			+\mathrm{i}\mu(1-\mu)\,\partial x\wedge\bar\partial x.
			\label{eq:alpha-x}
		\end{equation}
		We now specify the three tangent vectors used to test positivity.  Set
		\[
		e_r\coloneqq \frac{1}{\sqrt{\mu(1-\mu)}}(\xi_1\partial/\partial\xi_1+\xi_2\partial/\partial\xi_2).
		\]
		Then $e_r$ is an $\alpha$-unit vector with
		$
		\partial x(e_r)=\frac{1}{\sqrt{\mu(1-\mu)}}.
		$
		Choose an $\alpha$-unit vector $e_\perp$ in the $\alpha$-orthogonal
		complement of $e_r$ in the $\xi$-coordinate plane.  Since the
		Fubini--Study pairing of a vector $\sum_j a_j\partial/\partial\xi_j$
		with the radial vector equals $(1+|\xi|^2)^{-2}\sum_j a_j\bar\xi_j$,
		orthogonality forces $\sum_j a_j\bar\xi_j=0$, that is,
		$\partial x(e_\perp)=0$.  Finally,
		$e_w\coloneqq\partial/\partial w$ spans the vertical tangent line and
		has unit length for the Hermitian metric associated with $\omega$.
		
		In the ordered frame $(e_\perp,e_r,e_w)$, the Hermitian matrix of
		$\omega+\ddc V$ is
		\begin{equation}\label{eq:hermitian-matrix}
			\begin{pmatrix}
				\dfrac{\mu+V_x}{\mu} & 0 & 0\\[6pt]
				0 & \dfrac{\mu+V_{xx}}{\mu(1-\mu)} &
				\dfrac{\mu+V_{x\bar w}}{\sqrt{\mu(1-\mu)}}\\[9pt]
				0 & \dfrac{\mu+V_{wx}}{\sqrt{\mu(1-\mu)}} & 1+V_{w\bar w}
			\end{pmatrix}.
		\end{equation}
		Indeed, substituting \eqref{eq:alpha-x} in the calculation of
		$\omega+\mathrm{i}\partial\bar\partial V$ yields
\begin{align}
 \omega+\ddc V
 ={}&\frac{\mu+V_x}{\mu}\,\alpha
 +\mathrm{i}\bigl(\mu^2+V_{xx}-(1-\mu)V_x\bigr)
       \partial x\wedge\bar\partial x\notag\\
 &+\mathrm{i}(\mu+V_{x\bar w})
       \partial x\wedge\mathrm{d}\bar w
 +\mathrm{i}(\mu+V_{wx})
       \mathrm{d}w\wedge\bar\partial x\notag
 +\mathrm{i}(1+V_{w\bar w})
       \mathrm{d}w\wedge\mathrm{d}\bar w.
 \label{eq:omegaV-explicit}
\end{align}
        Evaluating this on
		$(e_\perp,e_r,e_w)$ gives \eqref{eq:hermitian-matrix}; here
		$V_{wx}=\overline{V_{x\bar w}}$ as $V$ is real.  This matrix is
		block diagonal, so its
		positivity is equivalent to the second condition in \eqref{eq:three-minors}, together with the
		positive definiteness of the lower-right $2\times2$ block; and a
		Hermitian matrix
		$\left(\begin{smallmatrix}a&b\\ \bar b&\rho\end{smallmatrix}\right)$
		is positive definite if and only if $\rho>0$ and $a\rho-|b|^2>0$.
		Here $\rho=1+V_{w\bar w}$ and
		\[
		a\rho-|b|^2
		=\frac{(\mu+V_{xx})(1+V_{w\bar w})-|\mu+V_{x\bar w}|^2}
		{\mu(1-\mu)}
		=\frac{1+V_{w\bar w}}{\mu(1-\mu)}\,Q[V],
		\]
		so the block is positive definite if and only if $1+V_{w\bar w}>0$
		and $Q[V]>0$.  This proves \eqref{eq:three-minors}.

		At $\xi=0$, our hypothesis for $x\ll 0$ gives $V=V_-(w)$ and
		$\partial u=\mathrm{d}w$.  Thus
		$\omega+\ddc V=\alpha+\mathrm{i}(1+V_{w\bar w})
		\mathrm{d}w\wedge\mathrm{d}\bar w$, so positivity is equivalent to
		$1+V_{w\bar w}>0$.  Finally, where $V$ is constant we have
		$\omega+\ddc V=\omega>0$.
	\end{proof}

	\subsection{The mass identity}

	\begin{lemma}\label{lem:mass}
		For any $V\in C^\infty(X,\mathbb R)$,
		\begin{equation}\label{eq:mass-identity}
			\int_X(\omega+\ddc V)^3
			=\int_X\omega^3+12\pi\int_{\mathbb P^2}
			\left(\int_{\pi^{-1}(y)}|V_w|^2\,\mathrm{d}A_E\right)
			\omega_{\mathrm{FS}}^2(y).
		\end{equation}
	\end{lemma}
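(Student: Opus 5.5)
Expand $(\omega+\ddc V)^3=\omega^3+3\omega^2\wedge\ddc V+3\omega\wedge(\ddc V)^2+(\ddc V)^3$ and integrate each term over the compact manifold $X$ using Stokes' theorem together with the identities \eqref{eq:hopf-identities}.

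The linear term vanishes: $\int_X\omega^2\wedge\ddc V=\int_X V\,\ddc(\omega^2)=0$. The cubic term $\int_X(\ddc V)^3$ is the integral of an exact form, hence zero. For the quadratic term, write $\ddc V\wedge\ddc V=\ddc(V\,\ddc V)$ and integrate by parts twice:
\[
\int_X\omega\wedge(\ddc V)^2=\int_X V\,\ddc\omega\wedge\ddc V .
\]
Since $\ddc\alpha=0$, we have $\ddc\omega=\ddc\beta=-\alpha^2$, so this equals $-\int_X V\,\alpha^2\wedge\ddc V$. As $\alpha^2$ is closed, one more integration by parts gives
\[
-\int_X V\,\alpha^2\wedge\mathrm{i}\partial\bar\partial V=\int_X \mathrm{i}\,\partial V\wedge\bar\partial V\wedge\alpha^2 .
\]
The signs here require care with $\mathrm{d}=\partial+\bar\partial$ and bidegree. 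Concretely, $\mathrm{d}(V\,\bar\partial V\wedge\alpha^2)$ has $(3,3)$ part $\partial V\wedge\bar\partial V\wedge\alpha^2+V\,\partial\bar\partial V\wedge\alpha^2$, which fixes the sign. Thus the excess mass is $3\int_X\mathrm{i}\,\partial V\wedge\bar\partial V\wedge\alpha^2\ge0$.

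It remains to evaluate this integral fiberwise. Since $\alpha^2=\pi^*\omega_{\mathrm{FS}}^2$ is a top-degree form on the base, only the vertical part of $\mathrm{i}\partial V\wedge\bar\partial V$ survives. In the trivialization \eqref{eq:affine-trivialization} this is $\mathrm{i}|V_w|^2\,\mathrm{d}w\wedge\mathrm{d}\bar w=4\pi|V_w|^2\,\mathrm{d}A_E$ by \eqref{eq:def-dA}. The complement of $\pi^{-1}(U_0)$ has measure zero, so it suffices to compute there. Fubini for the product $U_0\times E$ then gives
\[
3\cdot4\pi\int_{\mathbb P^2}\Bigl(\int_{\pi^{-1}(y)}|V_w|^2\,\mathrm{d}A_E\Bigr)\omega_{\mathrm{FS}}^2 ,
\]
which is the factor $12\pi$ in \eqref{eq:mass-identity}.

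The main obstacles are bookkeeping. First, the sign in the integration by parts must come out right, and it is consistent with the stated positivity of the correction. Second, the orientation convention for the fiber-times-base integral must be the natural product orientation, which holds because $\mathrm{i}\,\mathrm{d}w\wedge\mathrm{d}\bar w$ is positive and even-degree forms commute. Third, $V_w$ must be well defined globally on fibers. This is clear because a change of affine chart changes $w$ by a holomorphic function of the base, which leaves the vertical derivative $V_w$ on each fiber unchanged.
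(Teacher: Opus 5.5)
Your proposal is correct and follows the paper's proof essentially step for step: you use the same Stokes/$\ddc$ integrations by parts with $\ddc(\omega^2)=0$ and $\ddc\omega=-\alpha^2$, and the same fiberwise evaluation $\mathrm{i}\,\partial V\wedge\bar\partial V\wedge\alpha^2=4\pi|V_w|^2\,\mathrm{d}A_E\wedge\pi^*\omega_{\mathrm{FS}}^2$. Your explicit sign check via $\partial(V\,\bar\partial V\wedge\alpha^2)$ and your remark that $X\setminus\pi^{-1}(U_0)$ is a null set are details the paper leaves implicit.
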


	\begin{proof}
		Using Stokes' theorem,
		$\mathrm{d}(\ddc V)=0$, and \eqref{eq:hopf-identities}, one obtains
		\[
		\int_X(\ddc V)^3=0,
		\qquad
		\int_X\omega^2\wedge\ddc V
		=\int_X V\,\ddc(\omega^2)=0.
		\]
		Therefore
		\begin{align*}
			\int_X(\omega+\ddc V)^3-\int_X\omega^3
			&=3\int_X\omega\wedge(\ddc V)^2=3\int_X V\,\ddc\omega\wedge\ddc V\\
			&=-3\int_X V\,\alpha^2\wedge\ddc V=3\int_X\mathrm{i}\partial V\wedge\bar\partial V\wedge\alpha^2.
		\end{align*}
		Because $\alpha^2=\pi^*\omega_{\mathrm{FS}}^2$ already has top degree in the
		base variables, every component of $\partial V$ involving
		$\mathrm{d}\xi_1$ or $\mathrm{d}\xi_2$ vanishes after wedging with
		$\alpha^2$.  On overlaps, the fiber coordinates differ by a
		base-dependent holomorphic function, hence $\partial/\partial w$ and the
		fiber integral in \eqref{eq:mass-identity} are globally well defined.
		Moreover,
		\(
			\mathrm{i}\partial V\wedge\bar\partial V\wedge\alpha^2
			=4\pi |V_w|^2\,\mathrm{d}A_E\wedge\pi^*\omega_{\mathrm{FS}}^2,
		\)
		so Fubini's theorem gives \eqref{eq:mass-identity}.
	\end{proof}
	
	\section{Heat kernel potentials on the elliptic fiber}

	\subsection{The heat kernel and Fourier analysis}

	For $t>0$, define the heat kernel on $E$ by
	\begin{equation}
		k_t(w)\coloneqq \frac{2}{t}\sum_{\lambda\in\Lambda}
		\exp\left(-\frac{|w-\lambda|^2}{t}\right),
		\qquad
		\Lambda:=\mathbb Z+2\pi\mathrm{i}\mathbb Z,
		\label{eq:heat-kernel}
	\end{equation}
	the sum being evaluated at any lift of $w\in E$.  The series and all
	its derivatives converge locally uniformly, and the sum is unchanged
	when $w$ is replaced by $w+\lambda$ with $\lambda\in\Lambda$; thus
	$k_t$ is well defined, smooth, and strictly positive.  Unfolding a
	fundamental rectangle and using \eqref{eq:def-dA},
	\[
	\int_E k_t\,\mathrm{d}A_E
	=\frac{\mathrm{i}}{4\pi}\int_{\mathbb C}\frac2t\,
	\mathrm{e}^{-|w|^2/t}\,\mathrm{d}w\wedge \mathrm{d}\bar w
	=\frac{1}{\pi t}\int_{\mathbb R^2}\mathrm{e}^{-(a^2+b^2)/t}
	\,\mathrm{d}a\,\mathrm{d}b=1 .
	\]
	
	We shall also use Fourier analysis on $E$.  For
	$(p,q)\in\mathbb Z^2$, set
	\[
	\nu_{p,q}=p+\frac{\mathrm{i}q}{2\pi},
	\qquad
	\chi_{p,q}\colon E\to\mathbb C,
	\qquad
	\chi_{p,q}(w)
	=\mathrm{e}^{2\pi\mathrm{i}p\operatorname{Re}w
		+\mathrm{i}q\operatorname{Im}w};
	\]
	each $\chi_{p,q}$ is $\Lambda$-periodic, and
	$\{\chi_{p,q}\}_{(p,q)\in\mathbb Z^2}$ is an orthonormal basis of
	$L^2(E,\mathrm{d}A_E)$. Writing
	$\partial_w=\tfrac12(\partial_a-\mathrm{i}\partial_b)$ and
	$\partial_{\bar w}=\tfrac12(\partial_a+\mathrm{i}\partial_b)$, we
	have
	\begin{equation}
		\partial_w\chi_{p,q}
		=\left(\pi\mathrm{i}p+\frac q2\right)\chi_{p,q},
		\qquad
		\partial_{\bar w}\chi_{p,q}
		=\left(\pi\mathrm{i}p-\frac q2\right)\chi_{p,q},
		\qquad
		(\chi_{p,q})_{w\bar w}=-\pi^2|\nu_{p,q}|^2\,\chi_{p,q}.
		\label{eq:character-derivatives}
	\end{equation}
	The Fourier coefficients of $k_t$ are computed by unfolding:
	\[
	\int_E k_t\,\bar\chi_{p,q}\,\mathrm{d}A_E
	=\frac{1}{\pi t}\int_{\mathbb R^2}
	\mathrm{e}^{-(a^2+b^2)/t}\,
	\mathrm{e}^{-\mathrm{i}(2\pi pa+qb)}\,\mathrm{d}a\,\mathrm{d}b
	=\mathrm{e}^{-t\pi^2p^2}\,\mathrm{e}^{-tq^2/4}
	=\mathrm{e}^{-\pi^2|\nu_{p,q}|^2t}.
	\]
	This yields the Fourier expansion
	of $k_t$, using Poisson summation for the lattice $\Lambda$:
	\begin{equation}
		k_t(w)=\sum_{(p,q)\in\mathbb Z^2}
		\mathrm{e}^{-\pi^2|\nu_{p,q}|^2t}\,\chi_{p,q}(w),
		\label{eq:heat-Fourier}
	\end{equation}
	the series converging in $C^\infty(E)$ for every $t>0$, locally
	uniformly in $t$.  In particular, by
	\eqref{eq:character-derivatives},
	\begin{equation}
		\partial_t k_t=\partial_w\partial_{\bar w}k_t ,
		\label{eq:heat-equation}
	\end{equation}
	and $k_t\to1$ exponentially fast in $C^\infty(E)$ as $t\to\infty$.
	
	\subsection{The fiber potential and its energy}
	
	\begin{lemma}
		\label{lem:heat-fiber}
		For $0<\eta<1/10$, define
		$F_\eta\colon(0,\infty)\times E\to\mathbb R$ by
		\begin{equation}
			F_\eta(t,w)\coloneqq -\eta\int_t^\infty\bigl(k_r(w)-1\bigr)\,\mathrm{d}r
			=-\frac{\eta}{\pi^2}
			\sum_{(p,q)\ne(0,0)}
			\frac{\mathrm{e}^{-\pi^2|\nu_{p,q}|^2t}}
			{|\nu_{p,q}|^2}\,\chi_{p,q}(w).
			\label{eq:heat-potential}
		\end{equation}
		Then $F_\eta$ is smooth and real, and the following hold.
		\begin{enumerate}[label={(\arabic*)}]
			\item One has
			\begin{equation}
				(F_\eta)_t=(F_\eta)_{w\bar w}=\eta(k_t-1),
				\qquad
				1+(F_\eta)_{w\bar w}=1-\eta+\eta k_t>1-\eta.
				\label{eq:heat-identity}
			\end{equation}
			\item One has
			\begin{equation}
				(\log k_t)_{w\bar w}\ge-\frac1t
				\qquad\text{on }E,\quad t>0.
				\label{eq:log-heat}
			\end{equation}
			\item For $0<t\le1$,
			\begin{equation}
				\int_E|(F_\eta)_w(t,w)|^2\,\mathrm{d}A_E
				=2\eta^2\log\frac1t+O(\eta^2).
				\label{eq:heat-energy}
			\end{equation}
			Here the implicit constant is uniform for $0<t\le1$ and
			$0<\eta<1/10$.
			\item $F_\eta$ is exponentially small in the sense that, for all fixed integers $a,m\ge0$ and
			all $t\ge1$:
			\begin{equation}
				\|\partial_t^a F_\eta(t,\cdot)\|_{C^m(E)}
				\le C_{a,m}\,\eta\,\mathrm{e}^{-t/4}.
				\label{eq:heat-flat-F}
			\end{equation}
		\end{enumerate}
	\end{lemma}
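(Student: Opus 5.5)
Everything will be read off the Fourier expansion \eqref{eq:heat-Fourier} together with the Gaussian form \eqref{eq:heat-kernel}.

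\emph{Well-definedness and the series.} Since $\int_E k_r\,\mathrm{d}A_E=1$, the $(0,0)$ coefficient of $k_r-1$ vanishes. The other coefficients are $\mathrm{e}^{-\pi^2|\nu_{p,q}|^2 r}$, and $|\nu_{p,q}|\ge c>0$ for $(p,q)\neq(0,0)$. So $k_r-1$ decays like $\mathrm{e}^{-cr}$ in every $C^m(E)$ norm, and the $r$-integral converges. Integrating term by term gives the stated series, which converges in $C^\infty$ jointly in $(t,w)$ for $t>0$. It is real because $k_r$ is real, or equivalently because the coefficients are symmetric under $(p,q)\mapsto(-p,-q)$. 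This gives smoothness.

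\emph{Item (1).} Differentiating under the integral gives $(F_\eta)_t=\eta(k_t-1)$. By the heat equation \eqref{eq:heat-equation},
\[
(F_\eta)_{w\bar w}=-\eta\int_t^\infty\partial_r k_r\,\mathrm{d}r=\eta(k_t-1),
\]
using $k_r\to1$. The identity $1+(F_\eta)_{w\bar w}=1-\eta+\eta k_t$ then follows, and $k_t>0$ gives the strict inequality $>1-\eta$.

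\emph{Item (2).} This is the step needing care. Write $k_t=\frac2t\sum_\lambda \mathrm{e}^{-\phi_\lambda}$ with $\phi_\lambda=|w-\lambda|^2/t$, so that $(\phi_\lambda)_{w\bar w}=1/t$. For a sum $S=\sum \mathrm{e}^{-\phi_\lambda}$ one has
\[
(\log S)_{w\bar w}=\frac{\sum \mathrm{e}^{-\phi_\lambda}\bigl(|\partial_w\phi_\lambda|^2-(\phi_\lambda)_{w\bar w}\bigr)}{S}-\frac{\bigl|\sum \mathrm{e}^{-\phi_\lambda}\partial_w\phi_\lambda\bigr|^2}{S^2}.
\]
By Cauchy--Schwarz with respect to the probability weights $\mathrm{e}^{-\phi_\lambda}/S$, the gradient terms combine to a nonnegative variance. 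The remaining term is exactly $-1/t$, so $(\log k_t)_{w\bar w}\ge-1/t$. Equivalently, $\log$ of a sum of log-plurisubharmonic functions is plurisubharmonic, applied to $|w|^2/t+\log k_t$.

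\emph{Item (3).} By Parseval and \eqref{eq:character-derivatives}, $|\partial_w\chi_{p,q}|^2=\pi^2|\nu_{p,q}|^2$, so
\[
\int_E|(F_\eta)_w|^2\,\mathrm{d}A_E=\frac{\eta^2}{\pi^2}\sum_{(p,q)\ne0}\frac{\mathrm{e}^{-2\pi^2|\nu_{p,q}|^2t}}{|\nu_{p,q}|^2}.
\]
Compare this sum with the integral $\int_{\mathbb R^2}$ over $|\nu|\ge1$ of $\mathrm{e}^{-2\pi^2|\nu|^2t}|\nu|^{-2}\,\mathrm{d}p\,\mathrm{d}q$. The substitution $\nu=p+\mathrm{i}q/(2\pi)$ has Jacobian $2\pi$ relative to $\mathrm{d}\operatorname{Re}\nu\,\mathrm{d}\operatorname{Im}\nu$. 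In polar coordinates the integral becomes
\[
2\pi\cdot2\pi\int_1^\infty \mathrm{e}^{-2\pi^2\rho^2t}\,\frac{\mathrm{d}\rho}{\rho}=2\pi^2\log\frac{1}{\sqrt t}+O(1)=\pi^2\log\frac1t+O(1).
\]
Multiplying by $\eta^2/\pi^2$ gives $\eta^2\log(1/t)$, not $2\eta^2\log(1/t)$, which disagrees with the stated constant. I will recheck the normalization when writing this out. The rigorous approach is unchanged: bound the lattice sum minus the integral uniformly in $t\le1$ by a Riemann-sum or monotonicity argument on unit cells, treating the finitely many small $|\nu|$ separately. Whatever the constant comes out as, this gives $c\,\eta^2\log(1/t)+O(\eta^2)$ with the $O(\eta^2)$ uniform in $t\le1$. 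I expect the main obstacle to be this uniform error control, together with confirming the constant.

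\emph{Item (4).} Here $t\ge1$. We have
\[
\partial_t^aF_\eta=-\frac{\eta}{\pi^2}\sum_{(p,q)\ne0}(-\pi^2|\nu_{p,q}|^2)^a\,\frac{\mathrm{e}^{-\pi^2|\nu_{p,q}|^2t}}{|\nu_{p,q}|^2}\,\chi_{p,q}.
\]
The $C^m$ norm of $\chi_{p,q}$ is at most $C|\nu_{p,q}|^m$. Since $\pi^2|\nu_{p,q}|^2\ge\min(\pi^2,\tfrac14)=\tfrac14$ (the minimum attained at $(p,q)=(0,\pm1)$), split
\[
\mathrm{e}^{-\pi^2|\nu|^2t}\le \mathrm{e}^{-t/4}\,\mathrm{e}^{-\pi^2|\nu|^2(t-1)}\,\mathrm{e}^{-(\pi^2|\nu|^2-1/4)}\le \mathrm{e}^{-t/4}\,\mathrm{e}^{-(\pi^2|\nu|^2-1/4)}.
\]
The remaining sum $\sum|\nu|^{2a+m-2}\mathrm{e}^{-\pi^2|\nu|^2}$ converges, which gives $C_{a,m}\,\eta\,\mathrm{e}^{-t/4}$.
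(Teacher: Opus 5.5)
\textbf{Gap in item (3).} Items (1), (2) and (4) are essentially the paper's arguments. The paper reads (1) off the Fourier series rather than integrating the heat equation in $r$, and it proves (2) by the same variance identity.

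Item (3) is a genuine gap: your proposal does not establish the stated asymptotics. You assert the leading coefficient is $\eta^2$ rather than $2\eta^2$, but that discrepancy comes from an arithmetic slip, not from the statement. Your comparison integral
\[
4\pi^2\int_1^\infty \mathrm{e}^{-2\pi^2\rho^2 t}\,\frac{\mathrm{d}\rho}{\rho}
\]
is set up correctly, with Jacobian $2\pi$ and angular factor $2\pi$. However, the substitution $s=\rho^2$ gives
\[
\int_1^\infty \mathrm{e}^{-2\pi^2\rho^2 t}\,\frac{\mathrm{d}\rho}{\rho}=\frac12\int_1^\infty \mathrm{e}^{-2\pi^2 ts}\,\frac{\mathrm{d}s}{s}=\frac12\log\frac1t+O(1).
\]
So the comparison integral equals $4\pi^2\log\frac1{\sqrt t}+O(1)=2\pi^2\log\frac1t+O(1)$, not $2\pi^2\log\frac1{\sqrt t}$. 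As a check, the ellipse $\{p^2+q^2/(4\pi^2)\le R^2\}$ has area $2\pi^2R^2$, so the lattice density in $R$ is $4\pi^2R\,\mathrm{d}R$. Multiplying by $\eta^2/\pi^2$ gives $2\eta^2\log\frac1t$, exactly as stated.

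With this corrected, your lattice-sum-versus-integral comparison is a valid alternative route, but you still have to carry out the uniform error bound you only sketch. It does work: the summand is decreasing in $|p|$ and in $|q|$ separately, so a quadrant-wise unit-cell comparison applies. The terms on the axes $p=0$ and $q=0$, and the strips adjacent to them, contribute $O(1)$ uniformly in $0<t\le1$.

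\textbf{The paper's route.} The paper avoids lattice-point counting altogether. Differentiating the Parseval sum gives $\mathcal E_\eta'(t)=-2\eta^2\bigl(k_{2t}(0)-1\bigr)$. The Gaussian image form of the kernel gives $k_{2r}(0)=r^{-1}\bigl(1+O(\mathrm{e}^{-1/(4r)})\bigr)$ for $0<r\le1$. Integrating over $[t,1]$ then yields the constant $2$ and the uniform $O(\eta^2)$ error at once; Poisson duality does the counting for you.

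\textbf{A minor slip in (4).} Your middle expression $\mathrm{e}^{-t/4}\mathrm{e}^{-\pi^2|\nu|^2(t-1)}\mathrm{e}^{-(\pi^2|\nu|^2-1/4)}$ equals $\mathrm{e}^{-\pi^2|\nu|^2t}\mathrm{e}^{-(t-1)/4}$. This is smaller than the left-hand side when $t>1$, so the first inequality is false as written. The middle factor should be $\mathrm{e}^{-(\pi^2|\nu|^2-1/4)(t-1)}$. Your final bound $\mathrm{e}^{-\pi^2|\nu|^2t}\le\mathrm{e}^{-t/4}\mathrm{e}^{-(\pi^2|\nu|^2-1/4)}$ for $t\ge1$ is correct, and it is exactly what the paper uses.
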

	
	\begin{proof}
		Since
		$k_r-1\to0$ exponentially fast, the integral in
		\eqref{eq:heat-potential} converges, and termwise integration in $r$
		gives the stated Fourier series.  The series and all its
		$(t,w)$-derivatives converge uniformly on $[t_0,\infty)\times E$ for
		each $t_0>0$, so $F_\eta$ is smooth; it is real because the
		coefficients are real and invariant under
		$(p,q)\mapsto(-p,-q)$.
		
		(1)  Differentiating \eqref{eq:heat-potential} in $t$ gives
		$(F_\eta)_t=\eta(k_t-1)$; differentiating the Fourier series in
		$w,\bar w$ and using \eqref{eq:character-derivatives} gives
		\[
		(F_\eta)_{w\bar w}
		=\eta\sum_{(p,q)\ne(0,0)}\mathrm{e}^{-\pi^2|\nu_{p,q}|^2t}\chi_{p,q}
		=\eta(k_t-1). 
		\]
		The last inequality in \eqref{eq:heat-identity} holds
		because $k_t>0$.
		
		(2)  Fix a lift $w$ and introduce the probability weights
		\[
		P_{\lambda}\coloneqq \frac{\mathrm{e}^{-|w-\lambda|^2/t}}
		{\sum_{\lambda'\in\Lambda}\mathrm{e}^{-|w-\lambda'|^2/t}},
		\qquad\lambda\in\Lambda .
		\]
		From $\partial_{\bar w}|w-\lambda|^2=w-\lambda$ we get
		\begin{equation}
			t\,\partial_{\bar w}\log k_t(w)
			=-\sum_{\lambda\in\Lambda}(w-\lambda)\,P_\lambda ,
			\label{eq:heat-log-gradient-exact}
		\end{equation}
		and differentiating once more, now in $w$, we find
		\[
		(\log k_t)_{w\bar w}
		=-\frac1t+\frac1{t^2}
		\left\{
		\sum_{\lambda}|w-\lambda|^2P_{\lambda}
		-\Bigl|\sum_{\lambda}(w-\lambda)P_{\lambda}\Bigr|^2
		\right\}.
		\]
		The bracket is the variance of the $\mathbb C$-valued random variable
		$w-\lambda$ under the probability distribution $(P_\lambda)$, hence
		nonnegative.  This proves \eqref{eq:log-heat}.
		
		(3) Parseval's identity applied to \eqref{eq:heat-potential} yields
		\begin{equation}
			\mathcal E_\eta(t)\coloneqq \int_E|(F_\eta)_w|^2\,\mathrm{d}A_E
			=\frac{\eta^2}{\pi^2}
			\sum_{(p,q)\ne(0,0)}
			\frac{\mathrm{e}^{-2\pi^2|\nu_{p,q}|^2t}}
			{|\nu_{p,q}|^2}.
			\label{eq:heat-parseval}
		\end{equation}
		Differentiating termwise and comparing with \eqref{eq:heat-Fourier}, we find
		\[
		\mathcal E_\eta'(t)
		=-2\eta^2\sum_{(p,q)\ne(0,0)}\mathrm{e}^{-2\pi^2|\nu_{p,q}|^2t}
		=-2\eta^2\bigl(k_{2t}(0)-1\bigr).
		\]
		On the other hand, evaluating \eqref{eq:heat-kernel} at $w=0$ and time $2r$, and using $|\lambda|\ge1$ together with the elementary inequality 
		\(
		\mathrm{e}^{-|\lambda|^2/(2r)}\le
		\mathrm{e}^{-1/(4r)}\mathrm{e}^{-|\lambda|^2/4}
		\)
		for $0<r\le1$, we get
		\[
		k_{2r}(0)
		=\frac1r\Bigl(1+
		\sum_{\lambda\in\Lambda\setminus\{0\}}
		\mathrm{e}^{-|\lambda|^2/(2r)}\Bigr)
		=\frac1r+O\left(\frac{\mathrm{e}^{-1/(4r)}}{r}\right),
		\qquad 0<r\le1 .
		\]
		The error $r^{-1}\mathrm{e}^{-1/(4r)}$ is integrable on $(0,1]$, and
		$\mathcal E_\eta(1)\le C\eta^2$ by \eqref{eq:heat-parseval}.
		Integrating $\mathcal E_\eta'$ over $[t,1]$ therefore gives
		\eqref{eq:heat-energy}.
		
		(4)  Fix $a$ and $m$, and abbreviate $\nu\coloneqq\nu_{p,q}$.  Every
		derivative $\partial_t^a\partial^{m'}$ with $m'\le m$, applied to the
		series \eqref{eq:heat-potential}, is a sum over $(p,q)\ne(0,0)$ of terms
		bounded by a polynomial in $|\nu|$ times
		$\eta\mathrm{e}^{-\pi^2|\nu|^2t}$.  Writing
		\[
		\mathrm{e}^{-\pi^2|\nu|^2t}
		=\mathrm{e}^{-t/4}\,\mathrm{e}^{-(\pi^2|\nu|^2-1/4)t}
		\le \mathrm{e}^{-t/4}\,\mathrm{e}^{-(\pi^2|\nu|^2-1/4)}
		\]
		for $t\ge1$, and noting that
		$\sum_{\nu\ne 0}\mathrm{polynomial}(|\nu|)\,
		\mathrm{e}^{-(\pi^2|\nu|^2-1/4)}<\infty$, we obtain
		\eqref{eq:heat-flat-F}.
	\end{proof}
	
	\subsection{A heat-kernel estimate}

	\begin{lemma}
		There is an absolute constant $C_0\ge1$ such that, for
		$0<t\le1$ and $w\in E$,
		\begin{equation}
			tk_t(w)\le C_0\exp\left(
			-\frac{|t\,\partial_{\bar w}\log k_t(w)|^2}{C_0t}
			\right).
			\label{eq:heat-localization}
		\end{equation}
	\end{lemma}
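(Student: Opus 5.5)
We work directly with the lattice-sum representation \eqref{eq:heat-kernel} and the gradient formula \eqref{eq:heat-log-gradient-exact}. Fix a lift $w$ lying in the closed fundamental rectangle centred at $0$. Then $|\mathrm{Re}\,w|\le 1/2$ and $|\mathrm{Im}\,w|\le\pi$. Put $d=|w|$, which is the distance from $w$ to the nearest lattice point up to a bounded factor; the lattice point $0$ is a nearest one. The plan is to show two things for $0<t\le1$:
\begin{enumerate}
\item[(a)] $t k_t(w)\le C\,\mathrm{e}^{-d^2/t}$;
\item[(b)] $|t\,\partial_{\bar w}\log k_t(w)|\le C d+C\sqrt t$.
\end{enumerate}
Given these, $|t\partial_{\bar w}\log k_t|^2/(C_0t)\le 2C^2(d^2/t+1)/C_0$. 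Choosing $C_0\ge 2C^2$ makes the right side of \eqref{eq:heat-localization} at least $C_0\mathrm{e}^{-1}\mathrm{e}^{-d^2/t}$, which dominates (a) once $C_0\ge \mathrm{e}C$.

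For (a), write $tk_t(w)=2\sum_\lambda \mathrm{e}^{-|w-\lambda|^2/t}$. For $\lambda\ne0$ we have $|w-\lambda|\ge|\lambda|/2$ and $|w-\lambda|^2\ge |w|^2$, so $|w-\lambda|^2\ge \tfrac12|w|^2+\tfrac18|\lambda|^2$. Hence
\[
\sum_{\lambda\ne0}\mathrm{e}^{-|w-\lambda|^2/t}\le \mathrm{e}^{-d^2/(2t)}\sum_{\lambda\ne0}\mathrm{e}^{-|\lambda|^2/8}.
\]
This only gives $\mathrm{e}^{-d^2/(2t)}$, and that weaker exponent is in fact enough: rerunning the final comparison with $d^2/(2t)$ in place of $d^2/t$ only changes $C_0$. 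So the target in (a) becomes $t k_t\le C\mathrm{e}^{-d^2/(2t)}$, which the computation above delivers, with the $\lambda=0$ term contributing $\mathrm{e}^{-d^2/t}\le \mathrm{e}^{-d^2/(2t)}$.

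For (b), use \eqref{eq:heat-log-gradient-exact} to write $|t\partial_{\bar w}\log k_t|\le \sum_\lambda |w-\lambda|P_\lambda$. The term $\lambda=0$ contributes at most $d$. For $\lambda\ne0$, bound $P_\lambda$ by $\mathrm{e}^{-(|w-\lambda|^2-|w|^2)/t}$, dividing by the $\lambda=0$ term of the denominator. Since $w$ lies in the Voronoi cell of $0$, the gap $|w-\lambda|^2-|w|^2$ is nonnegative, but it can vanish on the cell boundary. So the contributions of lattice points equidistant to $0$ cannot be made small. Instead, this is exactly where the bound must be $Cd$ rather than a small error.

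The resolution is to estimate the numerator as $|w-\lambda|\le |w|+|\lambda|\le 3|\lambda|$ when $|\lambda|\ge 2|w|$, and as $|w-\lambda|\le 3d$ otherwise. Each $P_\lambda$ with $|\lambda|\ge 4\pi$, say, is bounded by $\mathrm{e}^{-|\lambda|^2/(8t)}$. Summing then gives $\sum_\lambda |w-\lambda|P_\lambda\le 3d+C\mathrm{e}^{-1/t}$, which proves (b) and even shows the $\sqrt t$ term is unnecessary.

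The main obstacle is this bookkeeping around near-equidistant lattice points. Bounding them by $3d$ instead of treating them as error terms resolves it, and the rest is routine Gaussian tail summation.
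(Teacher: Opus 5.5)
Your part (a) and the concluding comparison are fine. The weaker exponent $d^2/(2t)$ and the error term $C\sqrt t$, absorbed via $(a+b)^2\le 2a^2+2b^2$, are harmless, and they actually streamline the paper's final case analysis.

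The gap is in (b). You split $\lambda\neq0$ into two groups:
\begin{itemize}
\item $|\lambda|<2|w|$, where $|w-\lambda|\le 3d$;
\item $|\lambda|\ge 2|w|$, where you only have $|w-\lambda|\le 3|\lambda|$, so you need $P_\lambda$ to be small.
\end{itemize}
But you prove smallness of $P_\lambda$ only for $|\lambda|\ge 4\pi$. The finitely many $\lambda\neq0$ with $2|w|\le|\lambda|<4\pi$ are never estimated. These are exactly the near-equidistant points you flagged: the available bound $P_\lambda\le\exp\bigl(-|\lambda|(|\lambda|-2|w|)/t\bigr)$ gives nothing when $|\lambda|\approx 2|w|$. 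For instance, at $w=1/2$ the point $\lambda=1$ has $|\lambda|=2|w|$ and $P_1\approx1/2$. So the bound $3d+C\mathrm{e}^{-1/t}$ you assert does not follow from what you wrote.

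The missing ingredient is a dichotomy on $d$, which is exactly how the paper argues.
\begin{itemize}
\item If $d\ge1/4$: there are $O(1)$ middle-range terms, each of size $O(1)$, so together they contribute at most $C\le 4Cd$.
\item If $d<1/4$: every $\lambda\neq0$ has $|\lambda|\ge1$, hence $|w-\lambda|^2-|w|^2\ge|\lambda|(|\lambda|-2|w|)\ge|\lambda|/2\ge 1/2$. Therefore $P_\lambda\le\mathrm{e}^{-1/(2t)}$, and the middle-range terms are exponentially small.
\end{itemize}
This gives $|t\,\partial_{\bar w}\log k_t|\le C\bigl(d+\mathrm{e}^{-1/(Ct)}\bigr)\le C(d+\sqrt t)$. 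The constant in front of $d$ is some $C$, not necessarily $3$, which does not matter. That is your (b), and the rest of your argument then goes through.
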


	\begin{proof}
		Fix a lift of $w$, choose a nearest lattice point $\lambda_0$, and put
		\[
			d\coloneqq|w-\lambda_0|,
			\qquad
			s_\lambda\coloneqq|w-\lambda|^2-d^2.
		\]
		The number $d$ is uniformly bounded, $s_\lambda\ge0$, and the triangle
		inequality gives
		\[
			s_\lambda\ge\frac{|\lambda-\lambda_0|^2}{4}-C.
		\]
		For all sufficiently large $|\lambda-\lambda_0|$, the right-hand side
		is at least $|\lambda-\lambda_0|^2/8$; the remaining lattice points
		form a fixed finite set.  Since $s_\lambda\ge0$ and $d$ is bounded,
		there is an absolute constant $C_0\ge1$ such that, uniformly for
		$0<t\le1$,
		\[
			\sum_{\lambda\in\Lambda}\mathrm{e}^{-s_\lambda/t}
			+\sum_{\lambda\in\Lambda}|w-\lambda|
			\mathrm{e}^{-s_\lambda/t}\le C_0.
		\]
		Consequently
		\[
			tk_t(w)=2\,\mathrm{e}^{-d^2/t}
			\sum_{\lambda\in\Lambda}\mathrm{e}^{-s_\lambda/t}
			\le C_0\,\mathrm{e}^{-d^2/t}.
		\]
		For the gradient, the probability weights used in
		\eqref{eq:heat-log-gradient-exact} satisfy
		$P_\lambda\le\mathrm{e}^{-s_\lambda/t}$, and hence
		\[
			\bigl|t\,\partial_{\bar w}\log k_t\bigr|
			\le d+\sum_{\lambda\ne\lambda_0}|w-\lambda|
			\mathrm{e}^{-s_\lambda/t}.
		\]
		If $d\ge1/4$, the preceding uniform bound makes the right-hand side
		at most $C_0d$.  If $d<1/4$, then, for $\lambda\ne\lambda_0$,
		\(
			s_\lambda\ge|\lambda-\lambda_0|
			\bigl(|\lambda-\lambda_0|-2d\bigr)
			\ge\frac{|\lambda-\lambda_0|}{2},
		\)
		and summation over the lattice gives
		$\sum_{\lambda\ne\lambda_0}|w-\lambda|
		\mathrm{e}^{-s_\lambda/t}\le
		C_0\mathrm{e}^{-1/(C_0t)}$.  Thus, in both cases,
		\[
			G\coloneqq\bigl|t\,\partial_{\bar w}\log k_t(w)\bigr|
			\le C_0\left(d+\mathrm{e}^{-1/(C_0t)}\right).
		\]
		If $G>2C_0\mathrm{e}^{-1/(C_0t)}$, then $d\ge G/(2C_0)$,
		and the estimate for $tk_t$ proves \eqref{eq:heat-localization}.
		Otherwise $G^2/t$ is uniformly bounded for $0<t\le1$, and the same
		conclusion follows from $tk_t\le C_0$, after increasing the absolute
		constant.
	\end{proof}

	\begin{lemma}
		\label{lem:heat-transition} There exists $C>0$ such that,
		for $0<\kappa<\frac{1}{200C_0}$,
		\(
			0<\eta<\frac{1}{10},\ 
			0<t\le1, \frac{1}{2}\le\mu\le1,
		\)
and
		\[
			\sigma\coloneqq
			\sqrt{\frac{2\kappa}{\eta}\log\frac1{\eta t}},\qquad
			h\coloneqq\eta k_t,\qquad
			\zeta\coloneqq\frac{\sigma}{\mu}\,
			t\,\partial_{\bar w}\log k_t,
		\]
		we have, pointwise on $E$,
		\begin{equation}\label{eq:heat-transition}
			\kappa tk_t-
			\frac{\mu^2(1-\eta)h}{1-\eta+h}
			\left|\zeta-\frac1{1-\eta}\right|^2
			\le C\kappa\exp\left(-\frac{1}{C\sigma^2t}\right).
		\end{equation}
	\end{lemma}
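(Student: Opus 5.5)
The plan is a case analysis on the size of the complete square $\bigl|\zeta-\frac1{1-\eta}\bigr|^2$. The only heat-kernel input is the localization estimate \eqref{eq:heat-localization}, together with its consequence $tk_t\le C_0$ for $0<t\le1$. Note first that the left-hand side of \eqref{eq:heat-transition} is always at most $\kappa tk_t\le C_0\kappa$. Hence it suffices to prove the bound in a situation where the right-hand side is $\ge C_0\kappa$. In particular, whenever $\sigma^2t\ge c_1$ for some absolute $c_1>0$, the estimate holds once $C\ge C_0\mathrm{e}^{1/(Cc_1)}$. Enlarging $C$ accordingly is harmless.

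\emph{Case 1: $\bigl|\zeta-\frac1{1-\eta}\bigr|<\frac12$.} Since $\frac1{1-\eta}\ge1$, this forces $|\zeta|\ge\frac12$. Because $\mu\le 1$, this gives
\[
\bigl|t\,\partial_{\bar w}\log k_t\bigr|=\frac{\mu|\zeta|}{\sigma}\ge\frac{1}{4\sigma}.
\]
Here $\mu\ge\frac12$ was used. Plugging this into \eqref{eq:heat-localization} yields $tk_t\le C_0\exp\bigl(-1/(16C_0\sigma^2t)\bigr)$. Dropping the nonpositive square term, the left-hand side of \eqref{eq:heat-transition} is at most $C_0\kappa\exp(-1/(16C_0\sigma^2t))$, which is of the required form.

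\emph{Case 2: $\bigl|\zeta-\frac1{1-\eta}\bigr|\ge\frac12$.} Here I try to show that the square term absorbs $\kappa tk_t$, using $\mu^2\ge\frac14$. There are three subcases.

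If $h\ge1-\eta$, then $\frac{(1-\eta)h}{1-\eta+h}\ge\frac{1-\eta}{2}\ge\frac{9}{20}$. The subtracted term is then at least $\frac14\cdot\frac{9}{20}\cdot\frac14>\frac1{40}$. Meanwhile $\kappa tk_t\le C_0\kappa<\frac1{200}$ by the hypothesis on $\kappa$, so the left-hand side is negative.

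If $h<1-\eta$, then $\frac{(1-\eta)h}{1-\eta+h}\ge\frac h2=\frac{\eta k_t}{2}$. The subtracted term is therefore at least $\frac{\eta k_t}{32}$. If moreover $\kappa t\le\eta/32$, this dominates $\kappa tk_t$ and the left-hand side is again $\le0$.

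The remaining subcase is $h<1-\eta$ and $\kappa t>\eta/32$. There I use the preliminary remark:
\[
\sigma^2t=\frac{2\kappa t}{\eta}\log\frac1{\eta t}\ge\frac1{16}\log\frac1{\eta t}\ge\frac{\log 10}{16},
\]
where the last step uses $\eta t<\frac1{10}$. So $\sigma^2t$ is bounded below by an absolute constant, and the trivial bound $C_0\kappa$ suffices.

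The main (mild) obstacle is this last subcase. When $\eta\ll\kappa t$ the square term is too weak to absorb $\kappa tk_t$, and one must notice that the definition of $\sigma$ makes $\sigma^2t$ large exactly then, so the right-hand side is not small. Everything else is direct bookkeeping. The smallness $\kappa<1/(200C_0)$ is used only in the subcase $h\ge1-\eta$.
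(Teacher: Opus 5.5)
Your proof is correct and rests on the same three mechanisms as the paper's. These are the trivial bound $\kappa tk_t\le C_0\kappa$ once $\sigma^2t$ is bounded below by an absolute constant, absorption of $\kappa tk_t$ by the completed square when $\zeta$ stays away from $(1-\eta)^{-1}$, and the localization estimate \eqref{eq:heat-localization} when $\zeta$ is close to $(1-\eta)^{-1}$. The difference is only organizational: the paper first separates off $\sigma^2t\ge 1/100$ and then absorbs via the single inequality $100\kappa t(1+h)<\eta$, whereas you split on $\zeta$ first (so your localization case needs no smallness of $\sigma^2t$) and recover the lower bound $\sigma^2t\ge\log(10)/16$ from the failure of $\kappa t\le\eta/32$.
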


	\begin{proof}
		Let $C_0\ge1$ be the constant in \eqref{eq:heat-localization},
		and take any $\kappa>0$ with $C_0\kappa<1/200$.
		If $\sigma^2t\ge1/100$, the left-hand side of
		\eqref{eq:heat-transition} is at most $\kappa tk_t\le C_0\kappa$.
		Increasing $C$ gives the assertion.

		Suppose next that $\sigma^2t<1/100$ and
		$|\zeta-(1-\eta)^{-1}|\ge1/4$.  Since $\mu\ge1/2$ and
		$\eta<1/10$, the subtracted square is at least
		\(
			\frac{h}{100(1+h)}.
		\)
		Moreover, $\log(1/(\eta t))>2$, $tk_t\le C_0$, and the definition
		of $\sigma$ give
		\[
			\kappa t(1+h)
			\le\frac{\eta}{400}+C_0\kappa\eta
			\le\frac{3\eta}{400}<\frac{\eta}{100}
		\]
		by the choice of $\kappa$.  Since $h=\eta k_t$, this is
		equivalent to $\kappa tk_t\le h/(100(1+h))$; hence the left-hand
		side of \eqref{eq:heat-transition} is nonpositive.

		It remains to consider $\sigma^2t<1/100$ and
		$|\zeta-(1-\eta)^{-1}|<1/4$.  Then $|\zeta|\ge3/4$, so
		\[
			\bigl|t\,\partial_{\bar w}\log k_t\bigr|
			=\frac{\mu|\zeta|}{\sigma}\ge\frac{3}{8\sigma}.
		\]
		Hence \eqref{eq:heat-localization} yields
		\(
			tk_t\le C\exp\left(-\frac1{C\sigma^2t}\right),
		\)
	which implies
		\eqref{eq:heat-transition}.
	\end{proof}

	\section{Proof of the main theorem}
	
	We construct positive test potentials with unbounded
	fiberwise energy.

	\begin{proposition}\label{prop:transport}
		For every sufficiently small $\eta>0$ there is a smooth function
		$V_\eta\colon\mathbb R\times E\to\mathbb R$ with the following
		properties.
		\begin{enumerate}[label={(\arabic*)},ref={(\arabic*)}]
			\item\label{it:V-ends}
			$V_\eta$ is independent of $x$ for $x\ll0$ and is constant in
			both variables for $x\gg0$.
			\item\label{it:V-positivity}
			One has
			\[
				1+(V_\eta)_{w\bar w}>0,
				\qquad
				\mu+(V_\eta)_x>0,
				\qquad
				Q[V_\eta]>0.
			\]
			\item\label{it:V-energy}
			On the base ball
			$B\coloneqq\{\xi\in\mathbb C^2:|\xi|^2<\mathrm{e}^{-3}\}\Subset U_0$,
			\[
				\inf_{y\in B}
				\int_{\pi^{-1}(y)} |(V_\eta)_w|^2\,\mathrm{d}A_E
				\longrightarrow +\infty
				\qquad\text{as }\eta\downarrow0 .
			\]
		\end{enumerate}
	\end{proposition}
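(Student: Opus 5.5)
The construction is $V_\eta(x,w)=\chi(x)\,F_\eta(t_\eta(x),w)+g_\eta(x)$, with the three pieces chosen as follows.

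\emph{The time profile.} Fix $\kappa$ as in \cref{lem:heat-transition}. Put $\varepsilon_\eta$ by $\log(1/\varepsilon_\eta)=\eta^{-2}\log(1/\eta)$. Let $t=t_\eta(x)$ be nondecreasing and constant ($=\varepsilon_\eta$) for $x\le-3$. On $x\ge-3$ it should solve, up to smoothing, an ODE of the form $t_x=\sigma t$, with $\sigma=\sqrt{(2\kappa/\eta)\log(1/(\eta t))}$ as in \cref{lem:heat-transition}. Then $t$ goes from $\varepsilon_\eta$ to $1$ over an $x$-interval of length about $\int \mathrm{d}(\log t)/\sigma\approx\sqrt{2\eta\log(1/\varepsilon_\eta)/\kappa}\approx C\eta^{-1/2}\sqrt{\log(1/\eta)}$, which is finite. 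After $t=1$ it grows at a bounded rate up to $t\approx(\mathrm e\eta)^{-1}$ and beyond. There \eqref{eq:heat-flat-F} makes the $w$-dependence and all derivatives $O(\eta\mathrm e^{-t/4})$, which is tiny. Finally a cutoff $\chi(x)$ kills the $w$-dependent part, so that property \ref{it:V-ends} holds, and $g_\eta(x)$ is made constant for $x\gg0$.

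\emph{Energy and the simple conditions.} On $B$ we have $x<-3$, so $V_w=(F_\eta)_w(\varepsilon_\eta,\cdot)$. By \eqref{eq:heat-energy} its fiber energy is $2\eta^2\log(1/\varepsilon_\eta)+O(\eta^2)=2\log(1/\eta)+O(\eta^2)\to\infty$, which gives property \ref{it:V-energy}. The first inequality of property \ref{it:V-positivity} follows from \eqref{eq:heat-identity} wherever $\chi=1$, and from smallness wherever $\chi<1$. The second, $\mu+V_x>0$, holds because $V_x=t_x\eta(k_t-1)\ge -\eta t_x$ plus $g'$. We arrange for $\eta t_x$ to be small compared with $\mu$. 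This needs $\eta\sigma t\lesssim\sqrt{\kappa\eta\log(1/(\eta t))}\,t$, which is small for $t\le1$. In the later regime we take $g'\ge0$ on the region where $\mu$ is bounded below.

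\emph{The main step: $Q>0$.} With $h=\eta k_t$ we have
\[
V_{x\bar w}=t_x\eta\,\partial_{\bar w}k_t=t_x h\,\partial_{\bar w}\log k_t,\qquad
V_{xx}=t_{xx}\eta(k_t-1)+t_x^2\eta\,\partial_t k_t+g''.
\]
The heat equation gives $\partial_tk_t=(k_t)_{w\bar w}=k_t\bigl((\log k_t)_{w\bar w}+|\partial\log k_t|^2\bigr)$. Combined with \eqref{eq:log-heat}, this yields
\[
t_x^2\eta\,\partial_tk_t\ge t_x^2h\,|\partial\log k_t|^2-t_x^2h/t .
\]
Expanding $|\mu+V_{x\bar w}|^2/(1-\eta+h)$ and completing the square, the quadratic terms in $t_x$ combine into $\frac{\mu^2(1-\eta)h}{1-\eta+h}\bigl|\zeta-\frac1{1-\eta}\bigr|^2$-type nonnegative expressions, where $\zeta=\frac{t_x}{\mu}\partial_{\bar w}\log k_t=\frac{\sigma}{\mu}t\,\partial_{\bar w}\log k_t$. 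What remains, besides $\mu-\mu^2/(1-\eta)\cdot(\dots)$-type terms of size $O(\eta)$, is the loss $t_x^2h/t=\sigma^2 t\,\eta k_t$. Since $\sigma^2\eta=2\kappa\log(1/(\eta t))$, the choice of profile is tuned so that this loss is exactly of the form $\kappa t k_t$ handled by \cref{lem:heat-transition}. Keeping track of the logarithmic factor through the normalization of $\kappa$ is the delicate bookkeeping here. That lemma bounds the net negative part by $C\kappa\exp(-1/(C\sigma^2t))$. We must also treat $\mu<1/2$: put the transport on $x\ge0$, where $\mu\ge1/2$, so that on $x<0$ we have $V=F_\eta(\varepsilon_\eta,w)$ and $Q=\mu-\mu^2/(1-\eta+h)\ge\mu(1-\mu/(1-\eta))>0$. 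In that case we shift $B$'s radius condition, using that $x\le-3$ lies inside the region $x<0$.

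\emph{Absorbing the error.} Integrate the error in $x$ using $\mathrm{d}x=\mathrm{d}t/(\sigma t)$. The quantity $\int\kappa\exp(-1/(C\sigma^2t))\,\mathrm{d}t/(\sigma t)$ is $o(1)$, because $\sigma^2t$ is tiny except near $t\sim1$, where the $x$-length is $O(\sigma^{-1})=o(1)$. The $O(\eta)$ terms and the exponentially small cutoff terms are treated likewise. Choose $g_\eta$ convex with $g''$ dominating this negative part plus the defect $O(\eta)$ compared with the slack $\mu(1-\mu)$ on the support. Then $g'$ stays $o(1)$, so $\mu+V_x>0$ is preserved, and $g$ is constant for $x\ll0$ and affine for large $x$. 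An affine term $cx$ is not allowed at infinity, though, so it must be killed where $\mu$ is near $1$ and $Q$ has slack: $Q\ge\mu(1-\mu/(1-\eta+h))$ fails only mildly, so we use the fact that $V_{xx}$ may be negative of size at most the slack. I expect this last compatibility — the total positive mass of $g''$ must be $o(1)$ and returnable in a region with slack — together with verifying the completing-square inequality exactly, to be the main obstacle.
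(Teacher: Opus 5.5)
Your time profile ($t_x=\sigma t$ with $\sigma^2=(2\kappa/\eta)\log(1/(\eta t))$, started at $\varepsilon_\eta=\eta^{1/\eta^2}$ on $x\ge0$), your energy count on $B$, and your use of \cref{lem:heat-transition} all match the paper. However, the step you flag as the main obstacle fails in the form you propose: the slope accumulated by a convex correction $g_\eta$ cannot be paid back where $\mu$ is close to $1$.

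To see this, average $Q[V]>0$ over the fiber. Since $\int_E V_{x\bar w}\,\mathrm{d}A_E=\int_E V_{w\bar w}\,\mathrm{d}A_E=0$, Cauchy--Schwarz gives $\int_E|\mu+V_{x\bar w}|^2(1+V_{w\bar w})^{-1}\,\mathrm{d}A_E\ge\mu^2$, hence
\[
0<\int_E Q[V]\,\mathrm{d}A_E\le \bar V''+\mu(1-\mu),
\qquad
\bar V(x)\coloneqq\int_E V(x,\cdot)\,\mathrm{d}A_E .
\]
As $\mu_x=\mu(1-\mu)$ and $V$ is constant for $x\gg0$, integration over $[x,\infty)$ yields $\bar V'(x)<1-\mu(x)<\mathrm{e}^{-x}$ for \emph{every} admissible $V$, whatever is done further to the right. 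In your ansatz $\bar V=g_\eta$, because $F_\eta$ has fiber mean zero.

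Pointwise positivity forces $g_\eta''\ge\max_w[Q[W_\eta]]_-$. At fiber points far from the lattice, where $h=\eta k_t$ is negligible, $Q[W_\eta]$ is essentially the defect $\mu(1-\mu)-\mu^2\eta/(1-\eta)\le-\eta/2$ as soon as $x\ge\log(4/\eta)$. So your convex $g_\eta$, constant for $x\ll0$, has $g_\eta'\gtrsim\eta\,b_\eta\sim\sqrt{\eta\log(1/\eta)}$ at the end $b_\eta\sim\sqrt{\log(1/\eta)/\eta}$ of the transport, whereas $1-\mu(b_\eta)\approx\mathrm{e}^{-b_\eta}$.

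The missing idea is to pay the mass back \emph{before} the transport, on the left, at moderate $\mu$. On $(-2,-1)$ the potential is still $F_\eta(\varepsilon_\eta,\cdot)$ and $Q[W_\eta]\ge\mu(1-\mu/(1-\eta))\ge2/27$. The paper takes $g_\eta''=r_\eta-m_\eta\psi$, where $r_\eta>d_\eta$ on $[0,b_\eta]$, $\psi\ge0$ is a unit-mass bump in $(-2,-1)$, and $m_\eta=\int r_\eta=o(1)$. Then $g_\eta'$ first dips to $-m_\eta$ and climbs back to $0$, so $-m_\eta\le g_\eta'\le0$. Thus $g_\eta$ is constant at both ends and vanishes over $B$. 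The condition $\mu+V_x>0$ then comes from $m_\eta=o(1)$ and $\mu\ge\mu(-2)$ on $[-2,\infty)$, not from $g_\eta'\ge0$.

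Your bookkeeping in the main step is also off. The loss $t_x^2h/t=\sigma^2t\,\eta k_t=2\kappa\log(1/(\eta t))\,tk_t$ is not of the form $\kappa tk_t$: the logarithm reaches $\eta^{-2}\log(1/\eta)$. At the lattice points, where $\zeta=0$ and the completed square is only $O(1)$, such a loss could not be absorbed with $o(1)$ total mass. It is instead cancelled exactly by the $\sigma^2t\,h$ part of $t_{xx}\eta(k_t-1)$, because $t_{xx}=(\sigma_x+\sigma^2)t$. This gives
\[
Q[W_\eta]=\sigma^2t\bigl(h[1+t(\log k_t)_{w\bar w}]-\eta\bigr)+\sigma_xtF_t+\mu(1-\mu)-\frac{\mu^2\eta}{1-\eta}+\frac{\mu^2(1-\eta)h}{1-\eta+h}\Bigl|\zeta-\frac1{1-\eta}\Bigr|^2 ,
\]
whose first term is at least $-2\kappa\log(1/(\eta t))\,t$ by \eqref{eq:log-heat}, which is integrable in $x$. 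What \cref{lem:heat-transition} absorbs into the square is $\sigma_xtF_t=-\kappa tk_t+\kappa t$, which comes from $\sigma_x=-\kappa/\eta$ on the linear part of the profile.
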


	\begin{proof}[Proof of \cref{thm:main} assuming \cref{prop:transport}]
		Take $V_\eta$ furnished by the proposition.
		Property~\ref{it:V-ends} and \cref{lem:positivity} extend it smoothly
		to $X$.  Property~\ref{it:V-positivity} gives
		$\omega+\ddc V_\eta>0$.  The mass identity \eqref{eq:mass-identity}
		of \cref{lem:mass} together with property~\ref{it:V-energy} then
		gives
		\[
		\int_X(\omega+\ddc V_\eta)^3
		\ge
		\int_X\omega^3
		+12\pi\int_B
		\left(
		\int_{\pi^{-1}(y)}|(V_\eta)_w|^2\,\mathrm{d}A_E
		\right)\omega_{\mathrm{FS}}^2(y)
		\longrightarrow+\infty
		\]
		as $\eta\downarrow0$.  Taking $\varphi_j\coloneqq V_{\eta_j}$ for any
		sequence $\eta_j\downarrow0$ completes the proof.
	\end{proof}
	\begin{proof}[Proof of \cref{prop:transport}]
		We divide the construction into four steps.
		Fix $\kappa>0$ sufficiently small as in \cref{lem:heat-transition}.
		Choose once and for all $\eta_0\in(0,1/10)$ sufficiently small that
		all inequalities below that require $\eta$ to be small hold whenever
		$0<\eta<\eta_0$, and fix such an $\eta$.

		\emph{Step 1.}
		Set
		\[
			\varepsilon_\eta\coloneqq\eta^{1/\eta^2},
			\qquad
			q_0\coloneqq\sqrt{\log\frac1{\eta\varepsilon_\eta}},
			\qquad
			a_\eta\coloneqq\sqrt{\frac{\kappa}{2\eta}}.
		\]
		Choose a smooth nondecreasing function
		$\chi\colon\mathbb R\to[0,1]$ which is zero on $(-\infty,0]$,
		one on $[1,\infty)$, and satisfies $\int_0^1\chi=1/2$.  Define
		\[
			q(x)\coloneqq q_0-\int_0^{a_\eta x}\chi(s)\,\mathrm{d}s,
			\qquad
			t(x)\coloneqq\eta^{-1}\mathrm{e}^{-q(x)^2}.
		\]
		Since $q_x=-a_\eta\chi(a_\eta x)$, one has $q=q_0$ and
		$t=\varepsilon_\eta$ on $x\le0$, whereas
		$q(x)=q_0+1/2-a_\eta x$ for $x\ge a_\eta^{-1}$.  Define
		\[
			b_\eta\coloneqq\frac{q_0-1/2}{a_\eta},\qquad
			x_\eta^*\coloneqq\frac{q_0+1/2}{a_\eta},
		\]
		so that
		\[
			0<a_\eta^{-1}\ll b_\eta<x_\eta^*,\qquad
			q(b_\eta)=1,\qquad q(x_\eta^*)=0,\qquad
			b_\eta,x_\eta^*
			=O\left(\sqrt{\frac{\log(1/\eta)}{\eta}}\right).
		\]
		The function $q$ is shown in \cref{fig:q}.

		\begin{figure}[t]
			\centering
			\begin{tikzpicture}[scale=0.9,line cap=round]
				\pgfmathsetmacro{\xzero}{2.66}
				\pgfmathsetmacro{\xone}{7.60}
				\pgfmathsetmacro{\yzero}{2.60}
				\pgfmathsetmacro{\mright}{-\yzero/(\xone-\xzero)}
				\pgfmathsetmacro{\bpoint}{\xzero+(\yzero-0.55)/(-\mright)}
				\pgfmathsetmacro{\yend}{\mright*(8.4-\xone)}

				\draw[->] (-1.4,0) -- (9.4,0) node[right] {$x$};
				\draw[->] (0,-0.55) -- (0,3.9) node[above] {$q(x)$};
				\draw[densely dashed,gray] (-1.2,3.3) -- (0,3.3);
				\draw[densely dashed,gray] (\xzero,2.6) -- (0,2.6);
				\draw[densely dashed,gray] (\bpoint,0.55) -- (0,0.55);
				\node[left] at (-1.2,3.3) {$q_0$};
				\node[left] at (0,2.6) {$q_0-\tfrac12$};
				\node[left] at (0,0.55) {$1$};

				\draw[thick]
					(-1.2,3.3) -- (0,3.3)
					plot[domain=0:\xzero,samples=180,smooth]
					(\x,{3.3+\mright*\xzero*(
						7*(\x/\xzero)^5-14*(\x/\xzero)^6
						+10*(\x/\xzero)^7-2.5*(\x/\xzero)^8)})
					-- (8.4,\yend);

				\foreach \x/\ell in {
					\xzero/{a_\eta^{-1}},
					\bpoint/{b_\eta}
				}{
					\draw (\x,0.06) -- (\x,-0.06) node[below] {$\ell$};
				}
				\draw (\xone,0.06) -- (\xone,-0.06);
				\node[above] at (\xone,0.12) {$x_\eta^*$};
				\node[below left] at (0,0) {$0$};

				\draw[decorate,decoration={brace,mirror,raise=26pt}]
					(\xzero,0) -- (\bpoint,0)
					node[midway,below=28pt] {$q_x=-a_\eta$};
				\draw[decorate,decoration={brace,mirror,raise=26pt}]
					(\bpoint,0) -- (\xone,0)
					node[midway,below=28pt] {$0\le q\le1$};
			\end{tikzpicture}
			\caption{The graph of $q$.}
			\label{fig:q}
		\end{figure}

		Choose a smooth $\theta\colon\mathbb R\to[0,1]$ which is zero on
		$(-\infty,0]$ and one on $[1,\infty)$, and set
		\[
			W_\eta(x,w)\coloneqq\theta\bigl(q(x)\bigr)F_\eta(t(x),w).
		\]
		The cutoffs are constant on one-sided neighborhoods of their
		endpoints.  Hence $W_\eta$ is smooth, equals
		$F_\eta(\varepsilon_\eta,\cdot)$ on $x\le0$, and vanishes on
		$x\ge x_\eta^*$.  By \eqref{eq:heat-identity},
		\begin{equation}\label{eq:direct-fiber-positive}
			1+(W_\eta)_{w\bar w}
			=1+\theta(q)\eta(k_t-1)>1-\eta.
		\end{equation}

		\emph{Step 2.}
		For $a\in\mathbb R$, write $[a]_-\coloneqq\max\{-a,0\}$.
		We first estimate $[Q[W_\eta]]_-$ on $[0,b_\eta]$, where
		$q\ge1$ and hence $\theta(q)=1$.  Write
		\[
			h\coloneqq\eta k_t,\qquad
			\sigma\coloneqq(\log t)_x,\qquad
			\zeta\coloneqq\frac{\sigma}{\mu}\,
			t\,\partial_{\bar w}\log k_t .
		\]
		For $F=F_\eta(t(x),w)$, the chain rule gives
		\[
			W_x=\sigma tF_t,\qquad
			W_{xx}=(\sigma_x+\sigma^2)tF_t+\sigma^2t^2F_{tt},
			\qquad
			W_{x\bar w}=\sigma tF_{t\bar w}.
		\]
		Since $F_t=h-\eta$, the heat equation
		\eqref{eq:heat-equation} gives
		\[
			F_{t\bar w}=h\,\partial_{\bar w}\log k_t,\qquad
			h_t=h\left((\log k_t)_{w\bar w}
			+|\partial_{\bar w}\log k_t|^2\right).
		\]
		Substitution in $Q$ and completion of the square yield the exact
		identity
		\begin{align}
			Q[W_\eta]={}&
			\sigma^2t\Bigl(
			h\left[1+t(\log k_t)_{w\bar w}\right]-\eta
			\Bigr)
			+\sigma_x tF_t+\mu(1-\mu)-\frac{\mu^2\eta}{1-\eta}
			\nonumber\\
			&+
			\frac{\mu^2(1-\eta)h}{1-\eta+h}
			\left|\zeta-\frac1{1-\eta}\right|^2 .
			\label{eq:direct-Q-identity}
		\end{align}

		For $0\le x\le a_\eta^{-1}$, differentiation gives
		\[
			\sigma=2a_\eta q\chi(a_\eta x),\qquad
			\frac{\eta\sigma^2}{2q^2}=\kappa\chi(a_\eta x)^2,\qquad
			\sigma_x=-\frac{\kappa}{\eta}\chi(a_\eta x)^2
			+\frac{\kappa}{\eta}q\chi'(a_\eta x).
		\]
		Here $q_0-1/2\le q\le q_0$, so
		$t\le\varepsilon_\eta\mathrm{e}^{q_0}=o(\eta^N)$ for every $N>0$.
		Since $\chi'$ is bounded, our choice of $\eta_0$ ensures that
		\[
			\kappa q\chi'(a_\eta x)t\le\eta,\qquad
			\sigma^2t\le\eta^2.
		\]
		In particular, $t<1$.
		If $\chi(a_\eta x)>0$, then
		$0<\kappa\chi(a_\eta x)^2\le\kappa$ and $1/2\le\mu<1$.
		Since $t<1$ and $q^2=\log(1/(\eta t))$,
		\cref{lem:heat-transition} applies with
		$\kappa\chi(a_\eta x)^2$ in place of $\kappa$.
		The term containing $q\chi'(a_\eta x)$ in $\sigma_x tF_t$ is at
		least $-\kappa q\chi'(a_\eta x)t\ge-\eta$, as $F_t\ge-\eta$.
		Since
		$1+t(\log k_t)_{w\bar w}\ge0$ and
		$\mu(1-\mu)-\mu^2\eta/(1-\eta)\ge-2\eta$,
		\eqref{eq:direct-Q-identity}, \eqref{eq:heat-transition}, and
		$\sigma^2t\le\eta^2$ give $[Q[W_\eta]]_-\le C\eta$.
		If $\chi(a_\eta x)=0$, smooth nonnegativity gives
		$\chi(a_\eta x)=\chi'(a_\eta x)=0$, hence
		$\sigma=\sigma_x=0$.  Moreover
		$x\le a_\eta^{-1}=o(1)$, so $\mu<1-\eta$, and we obtain that
		\[
			Q[W_\eta]=\mu-\frac{\mu^2}{1-\eta+h}>0.
		\]
		For $a_\eta^{-1}\le x\le b_\eta$,
		\[
			\sigma=\sqrt{\frac{2\kappa q^2}{\eta}}=2a_\eta q,\qquad
			\sigma_x=-\frac{\kappa}{\eta},\qquad
			\mathrm{d}x=
			\sqrt{\frac{\eta}{2\kappa}}\,
			\frac{\mathrm{d}t}{tq}.
		\]
		When $0<t\le1$, \cref{lem:heat-transition} applies.  Then \eqref{eq:log-heat}, \eqref{eq:heat-transition} and \eqref{eq:direct-Q-identity} give
		\[
			\Bigl[Q[W_\eta]\Bigr]_-
			\le2\eta+2\kappa q^2t
			+C\kappa
			\exp\left(-\frac{\eta}{C\kappa q^2t}\right).
		\]
		Indeed, the first term of \eqref{eq:direct-Q-identity} is at least
		$-\eta\sigma^2t=-2\kappa q^2t$, while
		\[
			\sigma_x tF_t=-\kappa t(k_t-1)
			=-\kappa tk_t+\kappa t,
		\]
		and the completed square absorbs $\kappa tk_t$ up to the displayed
		exponential error.

		For $t\ge1$, one has $1\le q^2\le\log(1/\eta)$.  Applying the
		chain rule and \eqref{eq:heat-flat-F} directly to $Q$, and using
		$1-\eta+h\ge1-\eta$, gives
		\[
			\Bigl[Q[W_\eta]\Bigr]_-
			\le C\kappa q^2t^2\mathrm{e}^{-t/4}.
		\]

		Let
		\[
			d_\eta(x)\coloneqq
			\max_{w\in E}\Bigl[Q[W_\eta](x,w)\Bigr]_-.
		\]
		We integrate these bounds using $q_x=-a_\eta$ on
		$[a_\eta^{-1},b_\eta]$.  On the part where $t\le1$, one has
		$q\ge\sqrt{\log(1/\eta)}$, $t=\eta^{-1}\mathrm{e}^{-q^2}$, and
		$\mathrm{d}x=-\mathrm{d}q/a_\eta$.  Thus the term $2\kappa q^2t$
		has integral at most
		\[
			\frac{2\kappa}{a_\eta\eta}
			\int_{\sqrt{\log(1/\eta)}}^\infty
			q^2\mathrm{e}^{-q^2}\,\mathrm{d}q
			=o(1).
		\]
		Here we used
		$\int_R^\infty q^2\mathrm{e}^{-q^2}\,\mathrm{d}q
		\le CR\mathrm{e}^{-R^2}$ for $R\ge1$.
		For the exponential term, the same change of variables gives
		\[
			\frac{C\kappa}{a_\eta}
			\int_{\sqrt{\log(1/\eta)}}^{q_0}
			\exp\left(-\frac{\eta^2\mathrm{e}^{q^2}}
			{C\kappa q^2}\right)\mathrm{d}q.
		\]
		Split this integral at $2\sqrt{\log(1/\eta)}$.  The first part has
		length $O(\sqrt{\log(1/\eta)})$.  On the second part,
		$\eta^2\mathrm{e}^{q^2}/q^2\ge\mathrm{e}^{q^2/4}$ for sufficiently
		small $\eta$.  Extending the upper limit to infinity shows that the
		remaining integral is uniformly bounded.
		Its contribution is therefore $o(1)$.

		On the part where $t\ge1$, one has
		$1\le q\le\sqrt{\log(1/\eta)}$ and
		$\mathrm{d}x=\sqrt{\eta/(2\kappa)}\,\mathrm{d}t/(tq)$.  Hence
		\[
			\int \Bigl[Q[W_\eta]\Bigr]_-\,\mathrm{d}x
			\le C\sqrt\eta
			\int_1^{1/(\mathrm{e}\eta)}
			q t\mathrm{e}^{-t/4}\,\mathrm{d}t
			=o(1).
		\]
		Here $q\le\sqrt{\log(1/\eta)}$, while
		$t\mathrm{e}^{-t/4}$ is integrable on $[1,\infty)$.
		The constant term and the interval $[0,a_\eta^{-1}]$ together
		contribute at most $C\eta b_\eta=o(1)$.  Hence
		$\int_0^{b_\eta}d_\eta(x)\,\mathrm{d}x=o(1)$.

		The same choice of $t(x)$ controls the negative slope.  If $t\le1$,
		then
		\[
			[W_x]_-\le\sigma\eta t
			\le\sqrt{2\kappa\eta}\,
			t\sqrt{\log\frac1{\eta t}}
			\le C\sqrt{\eta\log\frac1\eta}=o(1).
		\]
		If $t\ge1$ and $q\ge1$, the chain rule and
		\eqref{eq:heat-flat-F} give the same bound.  If
		$b_\eta\le x\le x_\eta^*$, then $0\le q\le1$ and
		$(\mathrm{e}\eta)^{-1}\le t\le\eta^{-1}$.  Direct differentiation
		and \eqref{eq:heat-flat-F} give
		\[
			\sup_{w\in E}\biggl(
			|W_x|+\Bigl|Q[W_\eta]-Q[0]\Bigr|
			\biggr)
			\le C\mathrm{e}^{-1/(5\mathrm{e}\eta)}.
		\]
		Consequently
		\begin{equation}\label{eq:direct-negative-slope}
			\sup_{\mathbb R\times E}[W_x]_-
			=o(1).
		\end{equation}
		For $b_\eta\le x\le x_\eta^*$, we have
		\(
			\mu(1-\mu)\ge\frac14\mathrm{e}^{-x_\eta^*}
			\text{ and }
			C\mathrm{e}^{-1/(5\mathrm{e}\eta)}
			=o(\mathrm{e}^{-x_\eta^*}),
		\)
		by the estimate for $x_\eta^*$ in Step~1.  Together with
		\eqref{eq:direct-fiber-positive}, comparison with
		$Q[0]=\mu(1-\mu)$ gives $Q[W_\eta]>0$ there.  For $x\le0$,
		\[
			Q[W_\eta]
			=\mu-\frac{\mu^2}{1+(W_\eta)_{w\bar w}}
			\ge\mu\left(1-\frac{\mu}{1-\eta}\right)>0,
		\]
		and for $x\ge x_\eta^*$ one has
		$Q[W_\eta]=\mu(1-\mu)>0$.  Thus $d_\eta$ is continuous,
		supported in $[0,b_\eta]$, and
		\begin{equation}\label{eq:direct-negative-mass}
			\int_{\mathbb R}d_\eta(x)\,\mathrm{d}x
			=o(1).
		\end{equation}
		On $[-2,-1]$, where
		$1/9\le\mu\le3/10$, the formula for $x\le0$ gives
		\[
			Q[W_\eta]\ge\frac2{27}.
		\]

		\emph{Step 3.}
		We now add a function of $x$ to make $Q$ positive.
		Take a smooth nonnegative function $r_\eta$, supported in
		$(-1/2,b_\eta+1/2)$, such that
		\[
			r_\eta>d_\eta\quad\text{on }[0,b_\eta],
			\qquad
			m_\eta\coloneqq\int_{\mathbb R}r_\eta\,\mathrm{d}x
			\le\int_{\mathbb R}d_\eta\,\mathrm{d}x+\eta.
		\]
		By \eqref{eq:direct-negative-mass}, $m_\eta=o(1)$.
		Fix a smooth $\psi\ge0$, supported in $(-2,-1)$, with
		$\int_{\mathbb R}\psi\,\mathrm{d}x=1$, and define
		\[
			g_\eta(x)\coloneqq
			\int_{-\infty}^x(x-s)
			\Bigl(r_\eta(s)-m_\eta\psi(s)\Bigr)\,\mathrm{d}s.
		\]
		Thus $g_\eta''=r_\eta-m_\eta\psi$.  Since $\psi$ is supported to
		the left of $r_\eta$ and the two terms have the same integral,
		\[
			-m_\eta\le g_\eta'\le0,\qquad
			g_\eta'=0\quad\text{for }x\le-2
			\quad\text{and for }x\ge b_\eta+\frac12.
		\]
		In particular, $g_\eta=0$ at the negative end and is constant at
		the positive end.

		Set $V_\eta\coloneqq W_\eta+g_\eta$.  Since $g_\eta$ depends only
		on $x$,
		\[
			Q[V_\eta]=Q[W_\eta]+r_\eta-m_\eta\psi.
		\]
		On $[0,b_\eta]$ this is positive because $r_\eta>d_\eta$ and
		$\psi=0$.  On $(-2,-1)$ it is at least
		$2/27-m_\eta\|\psi\|_\infty>0$.  Elsewhere $Q[W_\eta]>0$,
		$\psi=0$, and $r_\eta\ge0$.  Hence $Q[V_\eta]>0$ everywhere.
		For the second inequality of \ref{it:V-positivity}, both $W_x$ and $g_\eta'$ vanish on $x\le-2$.
		On $[-2,0]$ one has $W_x=0$, $\mu\ge\mu(-2)>0$, and
		$g_\eta'\ge-m_\eta$; on $[0,\infty)$ one has $\mu\ge1/2$,
		$W_x\ge-o(1)$ by
		\eqref{eq:direct-negative-slope}, and $g_\eta'\ge-m_\eta$.
		Thus $\mu+(V_\eta)_x>0$.  Finally,
		\eqref{eq:direct-fiber-positive} gives
		\(
			1+(V_\eta)_{w\bar w}>1-\eta>0,
		\)
        proving \ref{it:V-positivity}.
		The endpoint descriptions of $W_\eta$ and $g_\eta$ prove
		property~\ref{it:V-ends}.

		\emph{Step 4.}
		It remains to check the energy at the center of the base chart.
		Over $0<|\xi|^2<\mathrm{e}^{-3}$ one has $x<-3$, hence
		$g_\eta=0$ and
		$V_\eta=F_\eta(\varepsilon_\eta,\cdot)$.  The same equality holds
		over $\xi=0$ by the smooth negative-end extension.  Since
		$0<\varepsilon_\eta<1$, \eqref{eq:heat-energy} gives, uniformly
		for $y\in B$,
		\[
			\int_{\pi^{-1}(y)}|(V_\eta)_w|^2\,\mathrm{d}A_E
			=2\eta^2\log\frac1{\varepsilon_\eta}+O(\eta^2)
			=2\log\frac1\eta+O(\eta^2)
			\longrightarrow+\infty.
		\]
		This proves \cref{prop:transport}.
	\end{proof}
	\printbibliography

@article {BT82,
    AUTHOR = {Bedford, Eric and Taylor, B. A.},
     TITLE = {A new capacity for plurisubharmonic functions},
   JOURNAL = {Acta Math.},
  FJOURNAL = {Acta Mathematica},
    VOLUME = {149},
      YEAR = {1982},
    NUMBER = {1-2},
     PAGES = {1--40},
      ISSN = {0001-5962,1871-2509},
   MRCLASS = {32F05 (31C10 32C30)},
  MRNUMBER = {674165},
MRREVIEWER = {Guy\ Laville},
       DOI = {10.1007/BF02392348},
       URL = {https://doi.org/10.1007/BF02392348},
}

@article {Y78,
    AUTHOR = {Yau, Shing Tung},
     TITLE = {On the {R}icci curvature of a compact {K}\"{a}hler manifold
              and the complex {M}onge-{A}mp\`ere equation. {I}},
   JOURNAL = {Comm. Pure Appl. Math.},
  FJOURNAL = {Communications on Pure and Applied Mathematics},
    VOLUME = {31},
      YEAR = {1978},
    NUMBER = {3},
     PAGES = {339--411},
      ISSN = {0010-3640,1097-0312},
   MRCLASS = {53C55 (32C10 35J60)},
  MRNUMBER = {480350},
MRREVIEWER = {Robert\ E.\ Greene},
       DOI = {10.1002/cpa.3160310304},
       URL = {https://doi.org/10.1002/cpa.3160310304},
}

@incollection {Cal57,
    AUTHOR = {Calabi, Eugenio},
     TITLE = {On {K}\"{a}hler manifolds with vanishing canonical class},
 BOOKTITLE = {Algebraic geometry and topology. {A} symposium in honor of
              {S}. {L}efschetz},
     PAGES = {78--89},
 PUBLISHER = {Princeton Univ. Press, Princeton, NJ},
      YEAR = {1957},
   MRCLASS = {53.3X},
  MRNUMBER = {85583},
MRREVIEWER = {P.\ Dolbeault},
}

@book {GZ17,
    AUTHOR = {Guedj, Vincent and Zeriahi, Ahmed},
     TITLE = {Degenerate complex {M}onge-{A}mp\`ere equations},
    SERIES = {EMS Tracts in Mathematics},
    VOLUME = {26},
 PUBLISHER = {European Mathematical Society (EMS), Z\"{u}rich},
      YEAR = {2017},
     PAGES = {xxiv+472},
      ISBN = {978-3-03719-167-5},
   MRCLASS = {32W20 (32Q20 32U15 32U20 32U40 35J96)},
  MRNUMBER = {3617346},
MRREVIEWER = {Slimane\ Benelkourchi},
       DOI = {10.4171/167},
       URL = {https://doi.org/10.4171/167},
}

@article{BGL,
  author        = {Boucksom, S\'ebastien and Guedj, Vincent and Lu, Chinh H.},
  title         = {Volumes of {Bott--Chern} classes},
  journal       = {Peking Math. J.},
  year          = {2025},
  doi           = {10.1007/s42543-025-00105-2},
  url           = {https://doi.org/10.1007/s42543-025-00105-2},
  eprint        = {2406.01090},
  archivePrefix = {arXiv},
  primaryClass  = {math.CV},
}

@article {GL22,
    AUTHOR = {Guedj, Vincent and Lu, Chinh H.},
     TITLE = {Quasi-plurisubharmonic envelopes 2: {B}ounds on
              {M}onge-{A}mp\`ere volumes},
   JOURNAL = {Algebr. Geom.},
  FJOURNAL = {Algebraic Geometry},
    VOLUME = {9},
      YEAR = {2022},
    NUMBER = {6},
     PAGES = {688--713},
      ISSN = {2313-1691,2214-2584},
   MRCLASS = {32W20 (32J18 32U05 35A23 35J96)},
  MRNUMBER = {4518244},
MRREVIEWER = {S\l awomir\ Dinew},
       DOI = {10.14231/ag-2022-021},
       URL = {https://doi.org/10.14231/ag-2022-021},
}

@article {GL23,
    AUTHOR = {Guedj, Vincent and Lu, Chinh H.},
     TITLE = {Quasi-plurisubharmonic envelopes 3: {S}olving
              {M}onge-{A}mp\`ere equations on hermitian manifolds},
   JOURNAL = {J. Reine Angew. Math.},
  FJOURNAL = {Journal f\"{u}r die Reine und Angewandte Mathematik. [Crelle's
              Journal]},
    VOLUME = {800},
      YEAR = {2023},
     PAGES = {259--298},
      ISSN = {0075-4102,1435-5345},
   MRCLASS = {32W20},
  MRNUMBER = {4609828},
MRREVIEWER = {S\l awomir\ Dinew},
       DOI = {10.1515/crelle-2023-0030},
       URL = {https://doi.org/10.1515/crelle-2023-0030},
}

@article {Che87,
    AUTHOR = {Cherrier, Pascal},
     TITLE = {\'{E}quations de {M}onge-{A}mp\`ere sur les vari\'{e}t\'{e}s
              hermitiennes compactes},
   JOURNAL = {Bull. Sci. Math. (2)},
  FJOURNAL = {Bulletin des Sciences Math\'{e}matiques. 2e S\'{e}rie},
    VOLUME = {111},
      YEAR = {1987},
    NUMBER = {4},
     PAGES = {343--385},
      ISSN = {0007-4497},
   MRCLASS = {58G30 (32C10 35J60 53C55)},
  MRNUMBER = {921559},
MRREVIEWER = {John\ M.\ Lee},
}

@article {TW10,
    AUTHOR = {Tosatti, Valentino and Weinkove, Ben},
     TITLE = {The complex {M}onge-{A}mp\`ere equation on compact {H}ermitian
              manifolds},
   JOURNAL = {J. Amer. Math. Soc.},
  FJOURNAL = {Journal of the American Mathematical Society},
    VOLUME = {23},
      YEAR = {2010},
    NUMBER = {4},
     PAGES = {1187--1195},
      ISSN = {0894-0347,1088-6834},
   MRCLASS = {32W20 (32Q15)},
  MRNUMBER = {2669712},
MRREVIEWER = {S\l awomir\ Ko\l odziej},
       DOI = {10.1090/S0894-0347-2010-00673-X},
       URL = {https://doi.org/10.1090/S0894-0347-2010-00673-X},
}

@incollection {DK12,
    AUTHOR = {Dinew, S\l awomir and Ko\l odziej, S\l awomir},
     TITLE = {Pluripotential estimates on compact {H}ermitian manifolds},
 BOOKTITLE = {Advances in geometric analysis},
    SERIES = {Adv. Lect. Math. (ALM)},
    VOLUME = {21},
     PAGES = {69--86},
 PUBLISHER = {Int. Press, Somerville, MA},
      YEAR = {2012},
      ISBN = {978-1-57146-248-0},
   MRCLASS = {32U05 (32U40 32W20 53C55)},
  MRNUMBER = {3077248},
MRREVIEWER = {Slimane\ Benelkourchi},
}

@incollection {KN15,
    AUTHOR = {Ko\l odziej, S\l awomir and Nguyen, Ngoc Cuong},
     TITLE = {Weak solutions to the complex {M}onge-{A}mp\`ere equation on
              {H}ermitian manifolds},
 BOOKTITLE = {Analysis, complex geometry, and mathematical physics: in honor
              of {D}uong {H}. {P}hong},
    SERIES = {Contemp. Math.},
    VOLUME = {644},
     PAGES = {141--158},
 PUBLISHER = {Amer. Math. Soc., Providence, RI},
      YEAR = {2015},
      ISBN = {978-1-4704-1464-1},
   MRCLASS = {32W20 (32U40)},
  MRNUMBER = {3372464},
MRREVIEWER = {Ly\ Kim\ Ha},
       DOI = {10.1090/conm/644/12775},
       URL = {https://doi.org/10.1090/conm/644/12775},
}

@article {Kol98,
    AUTHOR = {Ko\l odziej, S\l awomir},
     TITLE = {The complex {M}onge-{A}mp\`ere equation},
   JOURNAL = {Acta Math.},
  FJOURNAL = {Acta Mathematica},
    VOLUME = {180},
      YEAR = {1998},
    NUMBER = {1},
     PAGES = {69--117},
      ISSN = {0001-5962,1871-2509},
   MRCLASS = {32F07 (32C17 35J60)},
  MRNUMBER = {1618325},
MRREVIEWER = {M.\ Klimek},
       DOI = {10.1007/BF02392879},
       URL = {https://doi.org/10.1007/BF02392879},
}

@article {Gau77,
    AUTHOR = {Gauduchon, Paul},
     TITLE = {Le th\'{e}or\`eme de l'excentricit\'{e} nulle},
   JOURNAL = {C. R. Acad. Sci. Paris S\'{e}r. A-B},
  FJOURNAL = {Comptes Rendus Hebdomadaires des S\'{e}ances de l'Acad\'{e}mie
              des Sciences. S\'{e}ries A et B},
    VOLUME = {285},
      YEAR = {1977},
    NUMBER = {5},
     PAGES = {A387--A390},
      ISSN = {0151-0509},
   MRCLASS = {53C55 (32L05)},
  MRNUMBER = {470920},
}

@incollection {Hopf48,
    AUTHOR = {Hopf, H.},
     TITLE = {Zur {T}opologie der komplexen {M}annigfaltigkeiten},
 BOOKTITLE = {Studies and {E}ssays {P}resented to {R}. {C}ourant on his 60th
              {B}irthday, {J}anuary 8, 1948},
     PAGES = {167--185},
 PUBLISHER = {Interscience Publishers, New York},
      YEAR = {1948},
   MRCLASS = {56.0X},
  MRNUMBER = {23054},
MRREVIEWER = {S.\ Eilenberg},
}

@article {Chi14,
    AUTHOR = {Chiose, Ionu\c{t}},
     TITLE = {Obstructions to the existence of {K}\"{a}hler structures on
              compact complex manifolds},
   JOURNAL = {Proc. Amer. Math. Soc.},
  FJOURNAL = {Proceedings of the American Mathematical Society},
    VOLUME = {142},
      YEAR = {2014},
    NUMBER = {10},
     PAGES = {3561--3568},
      ISSN = {0002-9939,1088-6826},
   MRCLASS = {32J27 (32Q15)},
  MRNUMBER = {3238431},
MRREVIEWER = {J.\ T.\ Davidov},
       DOI = {10.1090/S0002-9939-2014-12128-9},
       URL = {https://doi.org/10.1090/S0002-9939-2014-12128-9},
}

@article {STW17,
    AUTHOR = {Sz\'{e}kelyhidi, G\'{a}bor and Tosatti, Valentino and
              Weinkove, Ben},
     TITLE = {Gauduchon metrics with prescribed volume form},
   JOURNAL = {Acta Math.},
  FJOURNAL = {Acta Mathematica},
    VOLUME = {219},
      YEAR = {2017},
    NUMBER = {1},
     PAGES = {181--211},
      ISSN = {0001-5962,1871-2509},
   MRCLASS = {53C55 (32C36 32Q99)},
  MRNUMBER = {3765661},
MRREVIEWER = {Keizo\ Hasegawa},
       DOI = {10.4310/ACTA.2017.v219.n1.a6},
       URL = {https://doi.org/10.4310/ACTA.2017.v219.n1.a6},
}

@article {TW15,
    AUTHOR = {Tosatti, Valentino and Weinkove, Ben},
     TITLE = {On the evolution of a {H}ermitian metric by its
              {C}hern-{R}icci form},
   JOURNAL = {J. Differential Geom.},
  FJOURNAL = {Journal of Differential Geometry},
    VOLUME = {99},
      YEAR = {2015},
    NUMBER = {1},
     PAGES = {125--163},
      ISSN = {0022-040X,1945-743X},
   MRCLASS = {53C44 (53C55)},
  MRNUMBER = {3299824},
MRREVIEWER = {Chengjie\ Yu},
       URL = {http://projecteuclid.org/euclid.jdg/1418345539},
}

@article {Fuj78,
    AUTHOR = {Fujiki, Akira},
     TITLE = {Closedness of the {D}ouady spaces of compact {K}\"{a}hler
              spaces},
   JOURNAL = {Publ. Res. Inst. Math. Sci.},
  FJOURNAL = {Kyoto University. Research Institute for Mathematical
              Sciences. Publications},
    VOLUME = {14},
      YEAR = {1978},
    NUMBER = {1},
     PAGES = {1--52},
      ISSN = {0034-5318,1663-4926},
   MRCLASS = {32G13},
  MRNUMBER = {486648},
MRREVIEWER = {H.\ Kerner},
       DOI = {10.2977/prims/1195189279},
       URL = {https://doi.org/10.2977/prims/1195189279},
}

@article {Var89,
    AUTHOR = {Varouchas, Jean},
     TITLE = {K\"{a}hler spaces and proper open morphisms},
   JOURNAL = {Math. Ann.},
  FJOURNAL = {Mathematische Annalen},
    VOLUME = {283},
      YEAR = {1989},
    NUMBER = {1},
     PAGES = {13--52},
      ISSN = {0025-5831,1432-1807},
   MRCLASS = {32C15 (32F05 32G10 32H35)},
  MRNUMBER = {973802},
MRREVIEWER = {Siegmund\ Kosarew},
       DOI = {10.1007/BF01457500},
       URL = {https://doi.org/10.1007/BF01457500},
}

@book {OV24,
    AUTHOR = {Ornea, Liviu and Verbitsky, Misha},
     TITLE = {Principles of locally conformally {K}\"{a}hler geometry},
    SERIES = {Progress in Mathematics},
    VOLUME = {354},
 PUBLISHER = {Birkh\"{a}user/Springer, Cham},
      YEAR = {2024},
     PAGES = {xxi+736},
      ISBN = {978-3-031-58119-9; 978-3-031-58120-5},
   MRCLASS = {53C55 (32E10 32Gxx 32Lxx 53-02 53C25)},
  MRNUMBER = {4771164},
       DOI = {10.1007/978-3-031-58120-5},
       URL = {https://doi.org/10.1007/978-3-031-58120-5},
}

@article {GL21,
    AUTHOR = {Guedj, Vincent and Lu, Chinh H.},
     TITLE = {Quasi-plurisubharmonic envelopes 1: uniform estimates on
              {K}\"{a}hler manifolds},
   JOURNAL = {J. Eur. Math. Soc. (JEMS)},
  FJOURNAL = {Journal of the European Mathematical Society (JEMS)},
    VOLUME = {27},
      YEAR = {2025},
    NUMBER = {3},
     PAGES = {1185--1208},
      ISSN = {1435-9855,1435-9863},
   MRCLASS = {32Q15 (32U05 32W20 35A23)},
  MRNUMBER = {4874940},
MRREVIEWER = {Ngoc\ Cuong\ Nguyen},
       DOI = {10.4171/jems/1460},
       URL = {https://doi.org/10.4171/jems/1460},
}

@misc{ALS25,
  author        = {Alehyane, Omar and Lu, Chinh H. and Salouf, Mohammed},
  title         = {Monge--{A}mp\`ere equations with prescribed singularities on compact {H}ermitian manifolds},
  year          = {2025},
  eprint        = {2511.02339},
  archivePrefix = {arXiv},
  primaryClass  = {math.CV},
}

@misc{LiX26,
  author        = {Li, Xuan},
  title         = {Monge--{A}mp\`ere type equations on compact {H}ermitian manifolds with bounded mass property},
  year          = {2026},
  eprint        = {2601.13446},
  archivePrefix = {arXiv},
  primaryClass  = {math.CV},
}

	Mingchen Xia, \textsc{Institute of Geometry and Physics, University of Science and Technology of China}\par\nopagebreak
	\textit{Email address:} \texttt{xiamingchen2008@gmail.com}

	Kewei Zhang, \textsc{School of Mathematical Sciences, Beijing Normal University}\par\nopagebreak
	\textit{Email address:} \texttt{kwzhang@bnu.edu.cn}

\end{document}